\numberwithin{equation}{section}
\numberwithin{figure}{section}
\theoremstyle{plain}
\newtheorem{thm}{Theorem}[section]
  \theoremstyle{plain}
  \newtheorem{conjecture}[thm]{Conjecture}
  \theoremstyle{remark}
  \newtheorem{rem}[thm]{Remark}
  \theoremstyle{plain}
  \newtheorem{lem}[thm]{Lemma}
  \theoremstyle{plain}
  \theoremstyle{remark}
  \newtheorem*{claim*}{Claim}
  \theoremstyle{plain}
  \newtheorem{prop}[thm]{Proposition}
\theoremstyle{definition}
\theoremstyle{theorem}
\theoremstyle{definition}
\theoremstyle{definition}
\theoremstyle{definition}
\theoremstyle{definition}
\newcommand{\cO}{\mathcal{O}}
\newcommand{\cP}{\mathcal{P}}
\newcommand{\cU}{\mathcal{U}}
\newcommand{\cX}{\mathcal{X}}
\newcommand{\cY}{\mathcal{Y}}
\newcommand{\bC}{\mathbb{C}}
\newcommand{\bD}{\mathbb{D}}
\newcommand{\bG}{\mathbb{G}}
\newcommand{\bL}{\mathbb{L}}
\newcommand{\bM}{\mathbb{M}}
\newcommand{\bP}{\mathbb{P}}
\newcommand{\bR}{\mathbb{R}}
\newcommand{\bZ}{\mathbb{Z}}
\newcommand{\bQ}{\mathbb{Q}}
\newcommand{\bF}{\mathbb{F}}
\newcommand{\bN}{\mathbb{N}}
\newcommand{\bH}{\mathbb{H}}
\newcommand{\bS}{\mathbb{S}}
\newcommand{\bu}{\mathbf{u}}
\newcommand{\SL}{\operatorname{SL}}
\newcommand{\SO}{\operatorname{SO}}
\newcommand{\Mat}{\operatorname{Mat}}
\newcommand{\defi}{\overset{\on{def}}{=}}
\newcommand{\upa}{\pa{\mathbf{\frac{\mathbf{\mathbf{u}}}{\norm{\mathbf{\mathbf{u}}}}},[\Delta_{\mathbf{u}}]}}
\newcommand\norm[1]{\left\|#1\right\|}
\newcommand\set[1]{\left\{#1\right\}}
\newcommand\pa[1]{\left(#1\right)}
\newcommand\av[1]{\left|#1\right|}
\newcommand\on[1]{\operatorname{#1}}
\newcommand\mb[1]{\mathbf{#1}}
\newcommand{\lra}{\longrightarrow}
\newcommand{\ra}{\rightarrow}
\newcommand{\onto}{\xymatrix{\ar@{>>}[r]&}}
\newcommand{\da}[4]{\xymatrix{#1 \ar@<.5ex>[r]^{#2} \ar@<-.5ex>[r]_{#3} & #4}}
\newcommand{\slra}[1]{\stackrel{#1}{\lra}}
\begin{document}

\title[Integer points and their orthogonal grids]{Integer points on spheres and their orthogonal grids}

\begin{abstract}
 The set of primitive vectors on large spheres in the euclidean space of dimension~$d\geq3$
 equidistribute when projected on the unit sphere. We consider here a refinement of this problem concerning the direction
 of the vector together with the shape of the lattice 
  in its orthogonal complement. Using unipotent dynamics we obtained
 the desired equidistribution result in dimension $d\geq 6$ and in dimension $d=4,5$
 under a mild congruence condition on the square of the radius. The case of $d=3$
 is considered in a separate paper.
\end{abstract}

\author[M. Aka]{Menny Aka}

\address{Departement Mathematik\\
ETH Z\"urich\\
R\"amistrasse 101\\
8092 Zurich\\
Switzerland}

\email{menashe-hai.akka@math.ethz.ch}
\thanks{M.A.~acknowledges the support of ISEF, Advanced Research Grant
228304 from the ERC, and SNF Grant 200021-152819. }

\author[M. Einsiedler]{Manfred Einsiedler}

\address{Departement Mathematik\\
ETH Z\"urich\\
R\"amistrasse 101\\
8092 Zurich\\
Switzerland}

\email{manfred.einsiedler@math.ethz.ch}
\thanks{M.E.~acknowledges the support of the SNF Grant 200021-127145 and 200021-152819. }

\author[U. Shapira]{Uri Shapira}

\address{Department of Mathematics\\
Technion \\
Haifa \\
Israel }

\email{ushapira@gmail.com}
\thanks{U.S.~acknowledges the support of the Chaya fellowship and ISF grant 357/13.}

\subjclass[2010]{37A15 (primary), 11H55, 11F85 (secondary)}

\maketitle

\section{Introduction}

Let $d\geq 3$ be a fixed integer. Let $\bZ_{{\rm prim}}^{d}$ be the
set of primitive vectors in $\bZ^{d}$. Set 
\[
\bS^{d-1}(D)\defi\set{v\in\bZ_{\textrm{prim}}^{d}:\norm{v}_{2}^{2}=D}=\bZ_{\rm prim}^d\cap \bigl(\sqrt{D}\bS^{d-1}\bigr),
\]
where $\bS^{d-1}\defi\set{x\in\bR^{d}:\norm{x}_{2}=\text{1}}$.  We would like to discuss
the simultaneous equidistribution of the direction~$\frac{v}{\sqrt{D}}\in\bS^{d-1}$
of the elements in~$\bS^{d-1}(D)$ and the {\em shape}~$[\Lambda_v]$ of the orthogonal lattice
\[
 \Lambda_v=\bZ^d\cap v^\perp.
\]

To make this more precise fix a copy of $\bR^{d-1}\defi\bR^{d-1}\times\set{0}$ in $\bR^{d}$
and choose for every~$v\in\bS^{d-1}(D)$ a rotation~$k_v\in\SO_d(\bR)$ with~$k_vv=\sqrt{D}e_d$
so that~$k_v\Lambda_v$ becomes a lattice in~$\bR^{d-1}$. Note that 
\begin{equation}
[\bZ^{d}:\pa{\bZ v\oplus\Lambda_{v}}]=D\label{eq:covolume-1}
\end{equation}
since primitivity of $v$ implies that the homomorphism $\bZ^{d}\ra\bZ$
defined by $u\mapsto(u,v)$ is surjective and $\bZ v\oplus\Lambda_{v}$
is the preimage of $D\bZ$. 

Therefore,~$k_v\Lambda_v$
is a lattice in~$\bR^{d-1}$ of covolume~$\sqrt{D}$. 
In order to normalize this covolume, we 
further multiply by the diagonal matrix 
$a_{v}={\rm diag}(D^{\frac{-1}{2(d-1)}},\ldots,D^{\frac{-1}{2(d-1)}},D^{\frac{1}{2}})$.
Note that the set of possible choice of~$k_v$ is precisely~$\SO_{d-1}(\bR)k_v$
and that $a_{v}$ commutes with $\SO_{d-1}(\bR)$. 
Recall that~$\SL_{d-1}(\bR)/\SL_{d-1}(\bZ)$ is identified
with the space of unimodular lattices in~$\bR^{d-1}$ so that we obtain an element
\[
 [\Lambda_v]\defi\SO_{d-1}(\bR)a_vk_v\Lambda_v\in
  \cX_{d-1}=\SO_{d-1}(\bR)\backslash \SL_{d-1}(\bR)/\SL_{d-1}(\bZ),
\]
which we refer to as the {\em shape of the lattice}~$\Lambda_v$.

It is possible to obtain still a bit more geometric information from the primitive 
vector~$v$ as follows.
Given~$v\in\bS^{d-1}(D)$ choose $w\in\bZ^{d}$ with $(w,v)=1$. If now~$v_1,\ldots,v_{d-1}$
is a~$\bZ$-basis of~$\Lambda_v$ we see that~$v_1,\ldots,v_{d-1},w$ is a~$\bZ$-basis
of~$\bZ^d$ and we may assume that $\det(v_{1},\cdots,v_{d-1},w)=1$. Let $g_{v}\in{\rm SL}_{d}(\bZ)$
denote the matrix whose columns are $v_{1},\ldots,v_{d-1},w$. 
Set~${\rm ASL}_{d-1}=\Bigl\{\begin{pmatrix}g&*\\0&1\end{pmatrix}\mid g\in\SL_{d-1}\Bigr\}$. The
set of possible choices for $g_{v}$ is the coset $g_{v}{\rm ASL}_{d-1}(\bZ)$.
We define a \emph{grid in $\bR^{d-1}$} to be a unimodular lattice~$\Lambda$ in $\bR^{d-1}$
together with a marked point on the $(d-1)$-dimensional torus~$\bR^{d-1}/\Lambda$. 
The space
${\rm ASL}_{d-1}(\bR)/{\rm ASL}_{d-1}(\bZ)$ is the moduli space of
grids in $\bR^{d-1}$.  Thus,
$a_{v}k_{v}g_{v}{\rm ASL}_{d-1}(\bZ)$ represents the grid consisting of
the rotated image of $\Lambda_{v}$ to $\bR^{d-1}$ together with the rotated
image of $w$ orthogonally projected into~$\bR^{d-1}$, and the well-defined double coset 
\begin{equation} \label{eq:Delta}
[\Delta_{v}]\defi{\rm SO}_{d-1}(\bR)a_{v}k_{v}g_{v}{\rm ASL}_{d-1}(\bZ)
\end{equation}
represents this grid up-to rotations of the hyperplane $\bR^{d-1}$.
Thus we obtain the element~$[\Delta_{v}]$ of the space 
\[
\cY_{d-1}\defi{\rm SO_{d-1}(\bR)\setminus ASL_{d-1}(\bR)/ASL_{d-1}(\bZ).}
\]
One should think about $[\Delta_{v}]$ as the shape of the orthogonal
lattice $\Lambda_{v}$ together with a point on the corresponding
$(d-1)$-dimensional torus which marks the position of orthogonal projection
of $w$ to the hyperplane containing $\Lambda_{v}$. 

Let $\tilde{\nu}_{D}$
denote the normalized counting measure on the set 
\[
\set{\pa{\frac{v}{\norm{v}},[\Delta_{v}]}:v\in\bS^{d-1}(D)}\subset\bS^{d-1}\times\cY_{d-1}.
\]
We are interested to find $A\subset\bN$ for which 
\begin{equation}
\tilde{\nu}_{D}\stackrel{\text{weak}^{*}}{\lra}m_{\bS^{d-1}}\otimes m_{\cY_{d-1}}\text{ as }D\ra\infty\text{ with }D\in A\label{eq:ASL2 convergences}
\end{equation}
where $m_{\bS^{d-1}}\otimes m_{\cY_{d-1}}$ is the product of the
natural uniform measures on $\bS^{d-1}$ and $\cY_{d-1}$. We propose
the following conjecture as a generalization of Linnik's Problem on
spheres (see e.g.\ \cite[\S 1]{MV2006ICM} for a survey):
\begin{conjecture}
\label{Conj:ASLd conjecture}The convergence in (\ref{eq:ASL2 convergences})
holds for the subset $A=\bN$ if~$d>4$, holds for~$A=\bN\setminus(8\bN)$ if~$d=4$,
and for the subset
\[ 
 A=\bigl\{D\geq 1\mid D \mbox{ is not congruent to~$0,4,7$ modulo }8\bigr\}
\] 
if~$d=3$.
\end{conjecture}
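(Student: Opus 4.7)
The plan is to realize $\tilde{\nu}_{D}$ as a pushforward from a periodic orbit in an adelic homogeneous space and apply Ratner-type equidistribution. Set $Q(x)=x_1^2+\cdots+x_d^2$, $G=\mathrm{Spin}(Q)$, and fix a model vector $v_0=\sqrt D\,e_d$, whose stabilizer $H\subset G$ is $\bQ$-isomorphic to $\mathrm{Spin}_{d-1}$. The affine data carried by the auxiliary vector $w$ entering $[\Delta_v]$ is absorbed by enlarging $G$ and $H$ to their natural semidirect products with the standard $d$- and $(d-1)$-dimensional representations, so that every pair $(v/\sqrt D,[\Delta_v])$ becomes a point of an adelic double coset space $X$ modeled on these affine spin groups. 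By strong approximation for $\mathrm{Spin}_d$, which is valid away from the archimedean place because $Q$ is positive definite over $\bR$, the image of $\bS^{d-1}(D)$ in $X$ is a finite union, generically a single closed $H(\bA)$-orbit, and $\tilde{\nu}_{D}$ becomes the pushforward of its normalized Haar measure.

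The core step is to invoke Mozes--Shah equidistribution for periodic orbits of semisimple groups generated by unipotents, which itself rests on Ratner's measure classification. For $d\geq 4$, $H=\mathrm{Spin}_{d-1}$ has no compact local factors and is generated at every place by unipotent one-parameter subgroups, so any weak$^*$ subsequential limit of the $\tilde{\nu}_{D}$ lifts to an $H(\bA)$-invariant probability measure on $X$ concentrated on a periodic orbit of some intermediate $\bQ$-subgroup $H\subseteq L\subseteq G$. Combined with the already-known individual equidistribution of the direction, namely Duke's theorem on primitive points of $\bS^{d-1}$, and with a non-escape of mass estimate, excluding every proper intermediate $L$ forces $L=G$ and yields the product Haar limit $m_{\bS^{d-1}}\otimes m_{\cY_{d-1}}$.

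The principal obstacle is precisely the classification and elimination of the intermediate subgroups $L$, together with the uniform non-escape of mass needed to apply Mozes--Shah. For $d\geq 6$ a direct inspection of $\bQ$-subgroups sandwiched between $\mathrm{Spin}_{d-1}$ and $\mathrm{Spin}_d$ leaves no candidate whose orbit volume could match that of the growing packet, so the conjecture should follow unconditionally. For $d=4,5$ the packet can collapse onto intermediate subvarieties tied to special genera of quadratic forms, and the congruence condition $D\notin 8\bN$ is designed to exclude the square classes where this collapse occurs; removing this condition for $d=5$, as the conjecture asserts is possible, requires finer arithmetic input such as control of the spinor genus. For $d=3$, $H=\mathrm{Spin}_2$ is a torus and unipotent dynamics yield nothing, so the argument must switch to subconvex bounds for the relevant $L$-functions in the style of Duke--Friedlander--Iwaniec and Harcos--Michel, which is why this case is treated in a companion paper.
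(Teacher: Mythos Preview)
The statement you are attempting to prove is a \emph{conjecture}; the paper does not prove it in full and explicitly says so. What the paper establishes is the weaker Theorem~\ref{thm:main theorem}: the full conjecture for $d>5$, and for $d=4,5$ only under the additional congruence condition $D\in\bD(p)$ for some fixed odd prime $p$. The case $d=3$ is deferred to a companion paper and remains open. Your proposal does not close this gap either; at the crucial points for $d=4,5$ and $d=3$ you yourself fall back on phrases like ``requires finer arithmetic input'' and ``must switch to subconvex bounds'', which is an acknowledgement that no proof is being given there.

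More importantly, your sketch contains a genuine technical error that explains exactly why the paper stops where it does. You assert that for $d\geq 4$ the stabilizer $H=\mathrm{Spin}_{d-1}$ ``has no compact local factors and is generated at every place by unipotent one-parameter subgroups''. This is false. The stabilizer of $v$ is not the spin group of the \emph{standard} form in $d-1$ variables but of the restriction $Q_v=Q_0|_{v^\perp}$, a form whose local isotropy depends on $D=\|v\|^2$. For $d-1\geq 5$ any such form is automatically isotropic over every $\bQ_p$, which is why the paper gets the unconditional result for $d>5$. For $d-1=3,4$ the form $Q_v$ can be anisotropic at primes dividing $D$ (see Lemma~\ref{lem:Isotropy of Hv } and its proof via the Hasse invariant), so $\bH_v(\bQ_p)$ is then compact and contains no unipotents; Mozes--Shah/Ratner (or the $S$-arithmetic version of Gorodnik--Oh used in the paper) simply does not apply at that place. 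The paper's hypothesis $p\nmid D$ is precisely the condition guaranteeing isotropy at the chosen prime $p$, and removing it requires effective methods of the type in \cite{EMMV}, not the qualitative argument you sketch. (A smaller issue: your model vector $v_0=\sqrt{D}\,e_d$ is not rational, so its stabilizer is not a $\bQ$-group; the paper works instead with the integral vector $v$ itself and $\bH_v=\mathrm{Stab}_{\bG_1}(v)$.)

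Where your outline overlaps with what the paper actually proves ($d>5$, and $d=4,5$ with $p\nmid D$), the strategy is essentially the same as the paper's: realize the packet as a periodic orbit of the diagonally embedded stabilizer on an $S$-arithmetic quotient, apply the Gorodnik--Oh version of Mozes--Shah, and rule out intermediate $\bQ$-subgroups $\bM$. The paper carries out the last step in detail (the Claim in \S\ref{sub:Step I for lattices} and Step~II in \S\ref{sub: ASL upgrading}), using maximality of $\bH_v$ in $\SO_d$ and of $\bH_{\phi_v}$ in $\SL_{d-1}$ together with a $p$-adic valuation argument exploiting $\|v_n\|\to\infty$; your one-line ``direct inspection of $\bQ$-subgroups'' hides exactly this work.
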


By a theorem of Legendre the restriction to the proper subset of~$\bN$ as in the above conjecture for~$d=3$
is equivalent to~$\bS^{2}(D)$ being nonempty, and hence necessary. A similar statement holds for~$d=4$.

In a separate paper \cite{AES3} we obtain for the case~$d=3$ some partial results towards this conjecture.
However, for~$d>3$ we can give much stronger results using the techniques presented here.
For an odd rational prime $p$ let $\bD(p)=\set{D:p\nmid D}$. The
main result of this paper is the following:

\begin{thm}
\label{thm:main theorem}Conjecture \ref{Conj:ASLd conjecture} is
true for $d>5$. For $d=5$ (resp.~$d=4$), the convergence in (\ref{eq:ASL2 convergences})
holds for the subset $A=\bD(p)$ (resp.~$A=\bD(p)\setminus(8\bN)$) where $p$ is any fixed odd prime.
\end{thm}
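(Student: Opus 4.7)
The plan is to use the joinings method in the framework of adelic unipotent dynamics, combined with Ratner's measure classification. The first step is an adelic reinterpretation of the data: for each $v\in\bS^{d-1}(D)$, the triple $(v,\Lambda_v,w)$ corresponds to a point on an adelic homogeneous space for the spin group $G=\on{Spin}_d$ of the form $q(x)=x_1^2+\cdots+x_d^2$. Using strong approximation for $G$, valid for $d\geq 4$ away from the real place, together with the fact that the stabilizer of $v$ is $H\simeq\on{Spin}_{d-1}$ acting on $v^\perp$, one shows that $\bS^{d-1}(D)$ together with its shape and grid data is the projection of a single closed $H(\bA_f)$-orbit on $G(\bQ)\backslash G(\bA)/K$ for an appropriate compact open $K$. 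Consequently $\tilde\nu_D$ is the pushforward to $\bS^{d-1}\times\cY_{d-1}$ of the normalized Haar measure on this periodic orbit.

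The next step is a joinings theorem on the product. The two projection maps land in homogeneous spaces of $\on{SO}_d(\bR)$ and $\on{ASL}_{d-1}(\bR)$, and the image of $H(\bA_f)$ acts diagonally on the product. For $D\to\infty$ along the prescribed set $A$, any weak-$\ast$ limit $\nu$ of $\tilde\nu_D$ is invariant under a non-trivial unipotent subgroup of $H$, obtained either from the $p$-adic component of $H(\bQ_p)$ (when $q$ is sufficiently isotropic at $p$) or from the real-place dynamics on the shape space. By Ratner's measure classification together with Mozes--Shah non-escape-of-mass, $\nu$ is a homogeneous measure supported on a finite union of closed orbits of a connected algebraic subgroup $L\subseteq H\times H$ which projects surjectively onto each factor. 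Goursat's lemma reduces the options for $L$ to either the full product $H\times H$, yielding the desired product measure $m_{\bS^{d-1}}\otimes m_{\cY_{d-1}}$, or to a twisted-diagonal subgroup $\{(h,\phi(h)):h\in H\}$ where $\phi$ is an isogeny between simple factors of $H$.

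The final and hardest step is the exclusion of the twisted-diagonal possibilities. For $d\geq 6$, the simple group $\on{Spin}_{d-1}$ admits no exceptional isogenies with the relevant targets, and the two marginals are built from inequivalent $H$-representations (the standard action on the sphere versus the shape-of-lattice action on $\cY_{d-1}$), so a representation-theoretic comparison together with a volume count rules out the twisted diagonals unconditionally. In dimensions $d=4,5$, the exceptional isogenies $\on{Spin}_5\cong\on{Sp}_4$ and $\on{Spin}_4\cong\SL_2\times\SL_2$ permit twisted diagonals to survive the abstract classification, and this is where the congruence hypothesis $p\nmid D$ is used: it ensures that the $p$-adic component of the adelic orbit is sufficiently transitive at $p$ to force $L$ to contain $H\times H$ locally, thereby excluding the twists. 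The additional restriction $D\notin 8\bN$ for $d=4$ is required to handle the failure of $\on{Spin}_4$ to be simple together with a genus-theoretic obstruction at the prime~$2$. \textbf{The main obstacle is precisely this exclusion step}: the abstract classification of intermediate subgroups only becomes useful once the arithmetic input from the $p$-adic hypothesis and the local behavior of $q$ at small primes is fed in to show that none of the twisted diagonals can actually be realized by the sequence $\tilde\nu_D$.
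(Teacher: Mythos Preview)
Your broad framework---realizing $\tilde\nu_D$ as the projection of a periodic $\bH_v$-orbit on an $S$-arithmetic homogeneous space and invoking a Ratner/Mozes--Shah result (the paper uses the Gorodnik--Oh $S$-arithmetic version)---is the same as the paper's. But the exclusion step as you describe it is miscast, and the role of the congruence hypotheses is misidentified.

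The ambient group for the joinings analysis is $\bG_1\times\overline{\bG}_2=\SO_d\times\SL_{d-1}$ (and then $\SO_d\times\ASL_{d-1}$ for the grid upgrade), not $H\times H$; the diagonal copy of $\bH_v\cong\SO_{d-1}$ sits there via the inclusion in the first factor and via conjugation by $g_v$ in the second. So there are no ``twisted diagonal'' subgroups $\{(h,\phi(h))\}$ of $H\times H$ to rule out; the two ambient simple factors are non-isomorphic (the accidental isogenies $\mathfrak{sl}_2\cong\mathfrak{so}_3$, $\mathfrak{sl}_4\cong\mathfrak{so}_6$ do not occur for these dimensions), so once one knows the limit group $\bM$ projects onto each factor, it is automatically the full product. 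The real work is showing those projections are full, and this is not done by representation theory or a volume count. The paper uses that $\bH_v$ is a \emph{maximal} semisimple subgroup of each factor: if the projection $\bM_i$ were proper, then the conjugates $\gamma_{v_n}^{-1}\bH_{v_n}\gamma_{v_n}$ would all coincide, forcing either the vectors $\delta_{v_n}^{-1}v_n$ or the companion matrices $M_{\phi_{v_n}}$ to agree up to a unit in $\bZ[\tfrac1p]^\times$. A boundedness argument on the $p$-adic valuation of that unit (coming from the convergence in Theorem~\ref{thm:gorod-oh}(3)) then contradicts $\|v_n\|\to\infty$, since $\det M_{\phi_{v_n}}=\|v_n\|^2$.

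Finally, you have the congruence conditions backwards. The hypothesis $p\nmid D$ for $d=4,5$ is not used to exclude intermediate subgroups; it is needed at the very start, to ensure that $\bH_v(\bQ_p)$ is \emph{isotropic} (the restricted form on $v^\perp$ has discriminant $D$, and $p\nmid D$ with $p$ odd forces trivial Hasse invariant), so that one has $p$-adic unipotents and the Mozes--Shah/Gorodnik--Oh machinery applies at all. For $d\geq 6$ this isotropy is automatic since every form in $\geq 5$ variables over $\bQ_p$ is isotropic. And the restriction $D\notin 8\bN$ for $d=4$ has nothing to do with the non-simplicity of $\on{Spin}_4$ or a genus obstruction at $2$: it is simply the necessary condition for $\bS^{3}(D)$ to be nonempty.
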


The reader may find it interesting to work out the meaning of Theorem~\ref{thm:main theorem} when one restricts the equidistribution in (\ref{eq:ASL2 convergences}) to open sets of the form $\bS^{d-1}\times U$ and $V\times \cY_{d-1}$ where $U$ is an arbitrary open set in $\cY_{d-1}$ (say, the neighbourhood of a specific lattice) and $V$ is an arbitrary open set in $\bS^{d-1}$ (say, an open ball centred at some direction on the sphere). 

Theorem \ref{thm:main theorem} will be proven using the theorem of Mozes and Shah \cite{MS95} concerning limits of algebraic
measures with unipotent flows acting ergodically. More precisely we will need a~$p$-adic analogue
of this result, which has been given more recently by Gorodnik and Oh \cite{GorOh2011}. 
In particular we note that Theorem~\ref{thm:main theorem} should therefore be considered a corollary of
the measure classification theorems for unipotent flows on~$S$-arithmetic quotients
(see \cite{RAT95} and \cite{MT94}). 

As explained in Lemma \ref{lem:Isotropy of Hv },
the congruence condition $D\in \bD(p)$ is a splitting condition which
enables us to use the existing theory of unipotent dynamics.
It is possible to remove this splitting condition for~$d=4,5$
by giving effective dynamical arguments in the spirit
of~\cite{EMMV} (see also~\cite{EMV}), but this result is not general
enough for that purpose. In~\cite{ERW2015} Ren\'e R\"uhr, Philipp Wirth, and the second
named author use the methods of~\cite{EMMV}
to obtain equidistribution on~$ \bS^{d-1}\times\cX_{d-1}$  for~$d=4,5$
without imposing any congruence condition. 
We note however, that the case~$d=3$ remains open (apart from the partial
results in \cite{AES3} that concerns itself only with the problem on~$\bS^2\times\cX_2$
and involves some stronger congruence conditions).

Our interest in this problem arose through the work of W.~Schmidt~\cite{WSchmidt-sublattices},
J.~Marklof~\cite{Marklof-Frobenius} (see also~\cite{EMSS}). However, as Peter Sarnak and Ruixiang Zhang
pointed out to us, Maass \cite{Maass1956} already asked similar questions in 1956 (see
also~\cite{Maass1959}). The generalisation to our joint equidistribution problem seems to be new. Thus, one may view the above question as the common refinement
of Linnik's problem and the question of Maass. 
\subsection*{Acknowledgements}
We would like to thank Andreas Str{\"o}mbergsson and the anonymous referee for suggestions and comments. While working on this project the authors visited the Israel Institute of Advanced Studies (IIAS) at the Hebrew University and its hospitality is deeply appreciated.

\section{Notation and organization of the paper}

A sequence of probability measures $\mu_{n}$ on a metric space
$X$ is said to equidistribute to a probability measure $\mu$ if
$\mu_{n}$ converge to $\mu$ in the weak$^{*}$ topology on the space
of probability measures on $X$. A probability measure $\mu$ is called
a weak$^{*}$\emph{ limit }of a sequence of measures $\mu_{n}$ if
there exists a subsequence $(n_{k})$ such that $\mu_{n_{k}}$ equidistribute
to $\mu$ as $k\ra\infty$. For a probability measure $\mu$ and a
measurable set $A$ of positive measure, the restriction $\mu|_{A}$
of $\mu$ to $A$ is defined by $\mu|_{A}(B)=\frac{1}{\mu(A)}\mu(A\cap B)$
for any measurable set $B$.

Recall that a discrete subgroup~$\Lambda<L$ is called a lattice if~$L/\Lambda$
admits an~$L$-invariant probability measure.
Given a locally compact group $L$ and a subgroup $M<L$ such that
$L/M$ admits an $L$-invariant probability measure, it is unique,
we denote it by $\mu_{L/M}$, and call it the \emph{uniform measure}
or the \emph{Haar measure} on $L/M$. If furthermore,~$K<L$ is
a compact subgroup then there is a natural quotient map~$L/M\to K\backslash L/M$,
and the uniform measure on~$K\backslash L/M$
is by definition the push forward of the Haar measure on~$L/M$. 

We recall that the Haar measure
on a finite volume orbit $Hg\Gamma$ is the push-forward of the
uniform measure on $H/\pa{H\cap g\Gamma g^{-1}}$. Note that a twisted orbit
of the form $gH\Gamma$ can be thought of as an orbit for the
subgroup~$gHg^{-1}$ since $gH\Gamma=gHg^{-1}g\Gamma$. 

For a finite set $S$ of valuations on $\bQ$ we set $\bQ_{S}=\prod_{v\in S}\bQ_{v}$
and $\bZ^S=\bZ\bigl[\bigl\{\frac{1}{p}:p\in S\setminus\{\infty\}\bigr\}\bigr]$. For a prime number $p$,
$\bZ_{p}$ denotes the ring of $p$-adic integers (so with this notation $\bZ^{\set{p}}\cap\bZ_{p}=\bZ$).
As usual we will embedd~$\bZ^S$ diagonally into~$\bQ_S$, where~$q\in\bZ^S$ is mapped to~$(q,\ldots,q)\in\bQ_S$. When 
$\infty\in S$, the group $\bZ^S$ is a discrete and cocompact subgroup of~$\bQ_S$.

For an algebraic group $\bP$ we write $\bP_{S}\defi\bP(\bQ_{S})$.
For a semisimple algebraic $\bQ$-group $\bP$ we let $\pi_{\bP}:\tilde{\bP}\ra\bP$
be the simply connected covering map over $\bQ$,  which is unique
up-to $\bQ$-isomorphism (see \cite[Thereom 2.6]{PR94} for details). We denote by $\bP_{S}^{+}$ the image of
$\tilde{\bP}_{S}$ under $\pi_{\bP}$. In some cases (which will be relevant to us) $\bP_{S}^{+}$ agrees with the group generated by one-parameter unipotent subgroups of $\bP_S$ (see \cite[Cor. 6.5]{BT73} and Section \ref{sec:spinor} for more information).
We also recall that~$\bP(\bZ^S)$
is a lattice in~$\bP(\bQ_S)$ if~$\infty\in S$ and~$\bP$ is semisimple and also if~$\bP={\rm ASL}_{d-1}$.
As we will see later the subgroup~$\bP_S^+<\bP_S$ plays an important role
in some of the ergodic theorems on~$\bP(\bQ_S)/\bP(\bZ^S)$ that we will use.

The letter $e$ will always denote the
identity element of a group, and we will sometimes use subscripts to indicate the corresponding group,
e.g.~we may write~$e_\infty\in\bP(\bR), e_p\in\bP(\bQ_p)$, or~$e_f\in\bP(\prod_{p\in S\setminus\{\infty\}}\bQ_p)$.

This paper is organised as follows: The desired equidistribution \eqref{eq:ASL2 convergences} follows from an equidistribution of "joined" orbits on a product of $S$-adic homogeneous spaces, which is proved in \S \ref{sec:orbits and duality}--\ref{sec:proof} using unipotent dynamics. The translation between the result of \S \ref{sec:orbits and duality} to \eqref{eq:ASL2 convergences} is stated in \S \ref{sec:equivalence relation} and proved in \S \ref{sec:translating}.

\section{Equidistribution of joined $S$-adic orbits\label{sec:orbits and duality}}

Fix a finite set~$S\ni\infty$ of valuations of~$\bQ$.
We define the algebraic groups $\bG_{1}={\rm SO_{d}},\bG_{2}={\rm ASL_{d-1}}$, $\overline{\bG}_{2}={\rm SL_{d-1}}$,
$\bG=\bG_{1}\times\bG_{2}$, and $\overline{\bG}=\bG_{1}\times\overline{\bG_{2}}$.
Consider the homogeneous spaces $\cY^{S}\defi\bG_{S}/\bG(\bZ^S)$ and
$\cX^{S}\defi\overline{\bG}_{S}/\overline{\bG}(\bZ^S)$.
We write $\pi^{S}:\cY^{S}\ra\cX^{S}$ for
the map induced by the natural projection $\rho:{\rm ASL}_{d-1}\ra{\rm SL}_{d-1}$.
Finally, let $\cY_{i}^{S}=\bG_{i,S}/\bG_{i}(\bZ^S)$.

For $v\in \bZ_{\textrm{prim}}^{d}$ we set $\bH_{v}\defi{\rm Stab}_{\bG_{1}}(v)$ under the natural action.
The group $\bH_{v}$ is defined over $\bZ\subset\bQ$ as $v\in\bZ^{d}$.
Let us mention at this point that we will prove Theorem~\ref{thm:main theorem}
by studying the dynamics and the orbits of the stabilizer~$\bH_v$ of~$v$
which in particular will allow us to conclude that there are many primitive
vectors in~$\bS^{d-1}(\|v\|^2)$ if~$D=\|v\|^2$ is sufficiently large.

\subsection{Dynamically producing primitive points on a sphere} 

Let us try to outline with a minimum of technology the key idea 
of why near density on the above homogeneous space (below proven by dynamical methods) 
can give us near density of primitive points on large spheres.
This idea is by no means new and to some extend implicitly 
appears already in the work of Linnik, and is explicitly 
used e.g.\ in the work of Ellenberg and Venkatesh \cite{EV08}. 

Let~$v\in\bS^{d-1}(D)$ be a large primitive point. Suppose that we already know for this vector
and some fixed odd prime~$p$  that the orbit~$\bH_v(\bQ_S)\bG_1(\bZ^S)$
is quite dense in~$\cY^S_1$ for~$S=\{\infty,p\}$. Given an arbitrary~$g_\infty\in\bG_1(\bR)$
we can then find some~$(h_\infty,h_p)\in\bH_v(\bQ_S)$ that gives us the approximate identity
\[
(h_\infty,h_p)\bG_1(\bZ^S)\approx(g_\infty,e)\bG_1(\bZ^S).
\]
Going back to the group we see that there exists some
diagonally embedded lattice element~$(\gamma,\gamma)\in\bG_1(\bZ^S)$
such that
\[
(h_\infty,h_p)(\gamma,\gamma) \approx(g_\infty,e) .
\]
We claim that~$w=\gamma^{-1}v\in\bZ^d$ is an integer vector, on the same sphere,
and in direction close to the arbitrary direction~$g_\infty^{-1}v$.
Indeed,~$\gamma^{-1}v=\gamma^{-1}h_\infty^{-1}v\approx g_\infty^{-1}v$ by using
the real component of the above approximate identity and~$\Vert\gamma^{-1}v\Vert=\Vert v\Vert=\sqrt{D}$
since~$\gamma\in{\rm SO}_d(\bZ[\frac1p])$.  
Moreover,~$w\in(\bZ[\frac1p])^d$ since~$\gamma$ has entries in~$\bZ[\frac1p]$
and~$\gamma^{-1}v=\gamma^{-1}h_p^{-1}v\approx v$ with respect to the topology in~$\bQ_p^d$ which implies~$\gamma^{-1}v\in\bZ^d$ as required. 
%Using~$\gamma\in\SO_d(\bZ[ \frac1p])$ it is not hard to upgrade
%this to~$\gamma^{-1}v\in\bZ^d_{{\rm prim}}$.

\subsection{Towards equidistribution}
We continue to setup some notation for the joint equidistribution.
We set ${\rm SO}_{d-1}(\bR)={\rm Stab}_{\bG_{1}}(e_{d})(\bR)$ and note
that $k_{v}^{-1}{\rm Stab}_{\bG_{1}}(e_{d})(\bR)k_{v}=k_{v}^{-1}{\rm SO}_{d-1}(\bR)k_{v}=\bH_{v}(\bR)$.
Consider the diagonally embedded algebraic subgroup $\bL_{v}<\bG$ defined
by 
\begin{equation}\label{def-lv}
\bL_{v}(R)\defi\set{\pa{h,g_{v}^{-1}hg_{v}}:h\in\bH_{v}(R)}
\end{equation}
for any ring $R$. As $g_{v}\in{\rm SL}_{d}(\bZ)$, the group $\bL_{v}$
is also defined over $\bZ\subset\bQ$. 

%We also define 
%\[
%\overline{\bL}_{v}(R)\defi\set{\pa{h,\rho_{{\rm SL_{d-1}}}(g_{v}^{-1}hg_{v})}:h\in\bH_{v}(R)}.
%\]

For $v\in\bS^{d-1}(D)$ let $\theta_{v}=a_{v}k_{v}g_{v}\in \bG_2(\bR)$ %$,\bar{\theta}_{v}=\rho_{{\rm SL_{d-1}}}(a_{v}k_{v}g_{v})$
and consider the orbit%s 
\begin{equation}\label{defOvS}
\mathbf{O}_{v,S}\defi(k_{v},e_{f},\theta_{v},e_{f})\bL_{v,S}\bG(\bZ^S)\subset\cY^{S}
\end{equation}
%and
%\[
%\mathbf{\overline{O}}_{v,S}\defi(k_{v},e_{f},\bar{\theta}_{v},e_{f})\overline{\bL}_{v,S}^{+}\overline{\bG}(\bZ^S)\subset\cX^{S}.
%\]
As $\bL_{v}$ is $\bQ$-anisotropic (e.g.$\ $because $\bL_{v}(\bR)$
is compact), the Borel Harish-Chandra Theorem (see e.g.\ \cite[Theorem I.3.2.4]{MR91}) implies
that this is a compact orbit. Set
 $\mu_{v,S}$ %(resp.\ $\overline{\mu}_{v,S}$) 
to be the Haar measure on this orbit  
and finally define $\mu_{S}\defi\mu_{\bG_{S}/\bG(\bZ^S)}$. %,\qquad \overline{\mu}_{S}\defi\mu_{\overline{\bG}_{S}^{+}/\overline{\bG}_{S}^{+}\cap\overline{\bG}(\bZ^S)}.$$ 
%\begin{rem}
%\label{rem:projection of measures}We have $\pi_{*}^{S}(\mu_{v,S})=\overline{\mu}_{v,S}$
%and $\pi_{*}^{S}(\mu_{S})=\overline{\mu_{S}}$.\end{rem}

\begin{thm}
\label{thm:Hom spaces equi}Let $p$ be a fixed odd prime and set
$S=\{\infty,p\}$. For $d>5$ and for any sequence $\set{v_{n}}\subset\bZ_{\textrm{prim}}^{d}$
with~$\|v_n\|\to\infty$ as~$n\to\infty$
we have that $\mu_{v_{n},S}$ converge in the weak$^{*}$ topology to
$\mu_{S}$. \\
The same conclusion holds for $d=4\text{ or }5$ when $\set{v_{n}}$
is a sequence of primitive vectors with~$\|v_n\|\to\infty$ as~$n\to\infty$
and $\norm{v_{n}}_{2}^{2}\in\bD(p)$
for any $n\in\bN$.
\end{thm}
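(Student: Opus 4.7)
The plan is to deduce the convergence from the $S$-arithmetic Mozes--Shah theorem of Gorodnik--Oh \cite{GorOh2011} applied to the quotient $\cY^S$, using that each $\bL_{v_n,S}$ contains a non-trivial unipotent one-parameter subgroup at the $p$-adic place. Passing to a subsequence, I may assume $\mu_{v_n,S}\to\mu$ weak$^*$ on~$\cY^S$, and the task is to identify $\mu$ as $\mu_S$.

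The first step is to verify the unipotent hypothesis. Since $\bL_{v_n}(\bR)\cong\bH_{v_n}(\bR)\cong\SO_{d-1}(\bR)$ is compact, any unipotent must come from the $\bQ_p$-factor. For $d>5$ the quadratic form restricted to $v_n^\perp$ has rank $d-1\geq 5$, hence is $\bQ_p$-isotropic by Meyer's theorem, so $\bH_{v_n}(\bQ_p)$ contains unipotents. For $d=4,5$ the same conclusion follows from the splitting condition $p\nmid\|v_n\|^2$ via Lemma \ref{lem:Isotropy of Hv }. I would also need to establish tightness of $\{\mu_{v_n,S}\}$, which I expect to obtain from a linearisation / non-escape-of-mass argument in the style of Dani--Margulis applied to the newly produced $p$-adic unipotent, together with the polynomial divergence of its orbits on $\cY^S$.

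By Gorodnik--Oh, the limit $\mu$ is the normalised Haar measure on a closed orbit $My_0$ for some closed subgroup $M\leq\bG_S$ generated by $\bQ_S$-unipotent subgroups, and $M$ contains the ``limits'' of suitable conjugates of $\bL_{v_n,S}^+$. The main task is then to show $M\supseteq\bG_S^+$, which by ergodicity forces $\mu=\mu_S$. Let $M_i\leq\bG_{i,S}$ denote the projections of $M$. Applying the same classification theorem to each factor separately (or invoking the already known equidistribution of primitive directions on $\cY_1^S$ and of shapes on $\cY_2^S$) gives $M_i\supseteq\bG_{i,S}^+$, so $M$ projects onto each factor. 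A Goursat-type dichotomy in this $S$-adic setting then states that either $M\supseteq\bG_{1,S}^+\times\bG_{2,S}^+$ (the desired conclusion) or $M$ is the graph of a non-trivial algebraic isomorphism between quotients of the two factors.

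The main obstacle will be ruling out the graph case. For the semisimple parts $\bG_1=\SO_d$ and $\overline{\bG}_2=\SL_{d-1}$, I would verify non-isomorphism by comparing absolute root systems: $\SO_d$ has type $D_{d/2}$ or $B_{(d-1)/2}$, while $\SL_{d-1}$ has type $A_{d-2}$, and these types are distinct in all relevant dimensions $d\geq 4$. The unipotent radical of $\bG_2=\ASL_{d-1}$, which is the abelian group of translations, must be handled separately: one uses that $M$ contains enough unipotents coming from the second component of $\bL_{v_n,S}$ to force full translation invariance on the fibres of $\pi^S:\cY^S\to\cX^S$, upgrading the equidistribution from $\cX^S$ to $\cY^S$.
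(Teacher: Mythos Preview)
Your outline has the right skeleton but skips the actual content of the argument, and there are two structural issues you would have to fix before even getting to that content.

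\textbf{The Gorodnik--Oh theorem applies only to semisimple ambient groups.} As stated in the paper (Theorem~\ref{thm:gorod-oh}), the group $\bG$ must be a connected semisimple $\bQ$-group. But $\bG=\SO_d\times\ASL_{d-1}$ is not semisimple, so you cannot apply the theorem directly on $\cY^S$. The paper handles this by first proving equidistribution on $\cX^S=(\SO_d\times\SL_{d-1})_S/(\SO_d\times\SL_{d-1})(\bZ^S)$ (Step~I), and then for the upgrade to $\cY^S$ (Step~II) it works inside the semisimple group $\bG'=\SO_d\times\SL_d$, embedding $\ASL_{d-1}$ there. Your proposal to ``handle the unipotent radical separately'' is in the right spirit but does not say how; in particular the second component of $\bL_{v,S}$ sits in $\ASL_{d-1}$ with translation part depending on $v$, and one has to rule out that the limit group is a conjugate $\SL_{d-1}^t=c_t\iota(\SL_{d-1})c_t^{-1}$ for some fixed $t\in\bQ^{d-1}$. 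The paper does this by a concrete computation producing a vector $\tilde v=\alpha_v v$ with $\alpha_v\in\bZ[\tfrac1p]^\times$ of bounded $p$-adic valuation, forcing $\|v\|$ to stay bounded.

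\textbf{Gorodnik--Oh concerns $\bL_{v,S}^+$-orbits, not $\bL_{v,S}$-orbits.} Since $\mu_{v,S}$ is the Haar measure on the full $\bL_{v,S}$-orbit, one first decomposes it via the spinor norm (Lemma~\ref{lem:spin}, Lemma~\ref{lem:finite index normal cover}) into four $\bL_{v,S}^+$-orbit measures $\mu_{v,S}^i$, $i\in\{1,r,p,rp\}$, and proves the convergence for each.

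\textbf{The main gap: identifying the limit group.} You write that applying the classification to each factor ``gives $M_i\supseteq\bG_{i,S}^+$'', but this is precisely what has to be \emph{proved}, and your sketch does not use the hypothesis $\|v_n\|\to\infty$ anywhere. The paper's mechanism is: Gorodnik--Oh part~(2) gives $\gamma_v\in\Gamma$ with $\gamma_v^{-1}\overline{\bL}_v\gamma_v\subset\bM$ for all large $v$; by maximality of $\bH_v$ in $\bG_1$ and of $\bH_{\phi_v}$ in $\SL_{d-1}$ (Lemma~\ref{lem:Isotropy of Hv }), if $\bM_i$ were proper then all these conjugates would have to coincide, yielding relations of the form $\delta_v^{-1}v=\alpha_u\delta_u^{-1}u$ (first factor) or $\eta_v^tM_{\phi_v}\eta_v=r_u\eta_u^tM_{\phi_u}\eta_u$ (second factor) with $\alpha_u,r_u\in\bZ[\tfrac1p]^\times$. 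Part~(3) of Gorodnik--Oh is then used to bound the $p$-adic valuation of $\alpha_u$ (resp.\ $r_u$), so these scalars are bounded real numbers, contradicting $\|v_n\|\to\infty$ together with $\det M_{\phi_u}=\|u\|^2$ from Lemma~\ref{lem:determinant and primitivity}. This is the heart of the proof and is absent from your proposal. Your alternative of ``invoking the already known equidistribution\ldots on $\cY_2^S$'' is circular: that equidistribution is part of what the theorem asserts.

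Finally, the Goursat/root-system step you describe is indeed how the paper finishes Step~I once $\bM_1=\bG_1$ and $\bM_2=\overline{\bG}_2$ are established (noting the accidental isomorphisms $\mathfrak{sl}_2\cong\mathfrak{so}_3$, $\mathfrak{sl}_4\cong\mathfrak{so}_6$ do not occur in the relevant range), and tightness comes for free from Theorem~\ref{thm:gorod-oh} via the finiteness (hence $\bQ$-anisotropy) of the centraliser of $\bL_v$.
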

This section contains some preparations for the proof of Theorem \ref{thm:Hom spaces equi} which is  proven in Section \ref{sec:proof}.

\subsection{Facts on  quadratic forms }
Let $Q_0$ denote the quadratic form $\sum_{i=1}^{d}x_{i}^{2}$.
Fix a vector $v\in\bS^{d-1}(D)$ and a rational matrix $\gamma\in{\rm SL_{d-1}}(\bQ)$.
Fix a choice of $g_{v}$ and consider the following quadratic map
\[
\phi_{v}^{\gamma}:\bQ^{d-1}\ra\bQ,\, u\mapsto(Q_0\circ g_{v}\circ\gamma)(u).
\]
As before we identify~$\bQ^{d-1}$ with~$\bQ^{d-1}\times\{0\}\subset\bQ^d$ so that~$g_v(\gamma(u))\in\bQ^d$
is well-defined. 
We set $\phi_{v}\defi\phi_{v}^{e}$. For a quadratic map $\phi$ let
$B_{\phi}$ be the associated bilinear form 
\[ 
 B_{\phi}(u_{1},u_{2})=\frac{1}{2}\big ( \phi(u_{1}+u_{2})-\phi(u_{1})-\phi(u_{2})\big ).
\]
Finally, the determinant of $\phi$ with respect to $b_{1},\ldots,b_{d-1}$
is $\det M_{\phi}$ where $M_{\phi}=\pa{B_{\phi}(b_{i},b_{j})}_{1\leq i,j\leq d-1}$.
When $b_{1},\ldots,b_{d-1}$ is a basis for $\bZ^{d-1}\subset\bQ^{d-1}$
and $B_{\phi}(b_{i},b_{j})\in\bZ$ for all $1\leq i,j\leq d-1$, the determinant is a well-defined
integer which does not depend of the choice of the basis. This is
the case for $\phi_{v}$ with the standard basis and the choice of
$g_{v}$ merely changes the basis, so does not influence the value
of the determinant. 

\begin{lem}
\label{lem:determinant and primitivity}
For any $v\in\bS^{d-1}(D)$
we have $B_{\phi_v}(\bZ,\bZ)\subset\bZ$ and $\det(\phi_{v})=D$. 
Moreover, there exist $u_{1},u_{2}\in\bZ^{d-1}$
such that $B_{\phi_{v}}(u_{1},u_{2})=1$.
In other words, the companion matrix $M_{\phi_v}$ is a primitive integer matrix.
\end{lem}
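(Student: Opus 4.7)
My plan is to interpret $\phi_v$ geometrically as the restriction of $Q_0$ to the orthogonal lattice $\Lambda_v$, and then to deduce primitivity of $M_{\phi_v}$ from the cyclicity of the discriminant group $\Lambda_v^*/\Lambda_v$, where $\Lambda_v^*$ denotes the dual lattice of $\Lambda_v$ inside $v^\perp$.

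First I would note that the columns of $g_v$ are $v_1,\dots,v_{d-1},w$ with $v_1,\dots,v_{d-1}$ a $\bZ$-basis of $\Lambda_v$, so that under the identification $\bQ^{d-1}\hookrightarrow \bQ^d$ we have $g_v(u)=\sum_{i=1}^{d-1}u_iv_i\in\Lambda_v$ for any $u\in\bZ^{d-1}$. Hence $\phi_v$ is literally the Gram form of $v_1,\dots,v_{d-1}$; integrality of $B_{\phi_v}$ on $\bZ^{d-1}\times\bZ^{d-1}$ is immediate, and $M_{\phi_v}=\bigl((v_i,v_j)\bigr)_{ij}$ satisfies $\det M_{\phi_v}=\on{covol}(\Lambda_v)^2$. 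Combining \eqref{eq:covolume-1}, which gives $\on{covol}(\bZ v\oplus\Lambda_v)=D$, with $\on{covol}(\bZ v)=\sqrt{D}$ inside the line $\bR v$, yields $\on{covol}(\Lambda_v)=\sqrt{D}$ and hence $\det(\phi_v)=D$.

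For primitivity the key input is the identification $\Lambda_v^*=\pi(\bZ^d)$, where $\pi\colon \bR^d\to v^\perp$ is the orthogonal projection and $\Lambda_v^*=\{x\in v^\perp : x\cdot y\in\bZ \text{ for all } y\in\Lambda_v\}$. The inclusion $\pi(\bZ^d)\subseteq\Lambda_v^*$ follows from $\pi(z)\cdot y=z\cdot y\in\bZ$ for $z\in\bZ^d$ and $y\in\Lambda_v$, and equality is forced by the covolume computation $\on{covol}(\pi(\bZ^d))=1/\sqrt{D}=\on{covol}(\Lambda_v^*)$. Since $v$ is primitive, the kernel of $\pi|_{\bZ^d}$ is exactly $\bZ v$, so $\Lambda_v^*\cong \bZ^d/\bZ v$, and quotienting by $\Lambda_v$ gives the cyclic group
\[
 \Lambda_v^*/\Lambda_v \;\cong\; \bZ^d/(\bZ v\oplus\Lambda_v)\;\cong\;\bZ/D\bZ
\]
again by \eqref{eq:covolume-1}.

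To finish, suppose for contradiction that some prime $p$ divides every entry of $M_{\phi_v}$, equivalently $p\mid B_{\phi_v}(u_1,u_2)$ for all $u_1,u_2\in\bZ^{d-1}$. Then $\tfrac{1}{p}\Lambda_v\subseteq \Lambda_v^*$, so the subgroup $\tfrac{1}{p}\Lambda_v/\Lambda_v\cong(\bZ/p\bZ)^{d-1}$ embeds into the cyclic group $\Lambda_v^*/\Lambda_v\cong\bZ/D\bZ$. As a cyclic group contains at most one subgroup of order $p$, this forces $d-1\leq 1$, contradicting $d\geq 3$. The main obstacle (and essentially the whole content beyond the routine determinant computation) is pinning down the duality $\Lambda_v^*=\pi(\bZ^d)$ together with the cyclicity of $\Lambda_v^*/\Lambda_v$; once these are in place, both halves of the lemma fall out.
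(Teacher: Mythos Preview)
Your proof is correct, and for the determinant computation it is essentially the same as the paper's (both read off $\det(\phi_v)=D$ from the covolume identity \eqref{eq:covolume-1} together with $\|v\|^2=D$). For primitivity, however, you take a genuinely different route. The paper argues constructively: it chooses the basis so that $v_1\in v^\perp\cap w^\perp$ (possible since $d\geq 3$), picks $u\in\bZ^d$ with $(u,v_1)=1$, and subtracts a multiple of $w$ to force $u\in\Lambda_v$, thereby exhibiting explicit $u_1,u_2$ with $B_{\phi_v}(u_1,u_2)=1$. Your argument is instead structural: you identify the discriminant group $\Lambda_v^*/\Lambda_v$ with $\bZ^d/(\bZ v\oplus\Lambda_v)\cong\bZ/D\bZ$ (this cyclicity is exactly the content of the sentence proving \eqref{eq:covolume-1}, so your citation is apt), and then observe that a common prime factor $p$ of all entries of $M_{\phi_v}$ would embed $(\bZ/p\bZ)^{d-1}$ into this cyclic group. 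The paper's version is more elementary and actually produces the witnessing pair; yours yields more, namely the full structure of $\Lambda_v^*/\Lambda_v$, and makes the role of $d\geq 3$ equally transparent (a cyclic group cannot contain $(\bZ/p\bZ)^2$).
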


\begin{proof}
Using Equation (\ref{eq:covolume-1}) we see that the determinant of $Q_0$
with respect $v_{1},\ldots,v_{d-1},v$ is $D^{2}$. But $M_{Q_0}$ with
respect this basis, is a block matrix having a $d-1$-block whose
determinant is $\det(\phi_{v})$ and a $1$-block whose value is $(v,v)=D$.
Thus the first assertion follows.

Let $v_{1},\ldots,v_{d-1},w$ be a basis chosen as in the introduction.
Since~$d\geq 3$ we can assume without loss of generality that $v_{1}$ is in $v^{\perp}\cap w^{\perp}$.
It is enough to show the second assertion while considering the map
$\phi_{v}$ with this choice of a basis as the columns of the matrix
$g_{v}$. As $v_{1}$ is primitive we can find $u\in\bZ^{d}$ with
$(u,v_{1})=1$. As $(w,v_{1})=0$ we can add to $u$ multiples of
$w$, so we can assume (as $v_{1},\ldots,v_{d-1},w$ is a basis for
$\bZ^{d}$) that $u=\sum_{i=1}^{d-1}a_{i}v_{i},\: a_{i}\in\bZ$. This
implies that $\sum_{i=1}^{d-1}a_{i}(v_{i},v_{1})=1$ showing $B_{\phi_{v}}(e_{1},(a_{1},\ldots,a_{d-1}))=1$.
\end{proof}

Define $\bH_{\phi}={\rm SO}(\phi)< {\rm SL}_{d-1}$
by $\set{T\in{\rm SL}_{d-1}:\phi\circ T=\phi}$. Recall that
$\rho:{\rm ASL}_{d-1}\ra{\rm SL}_{d-1}$ denotes
the natural projection. Following the definitions, we have that 
\begin{equation}
\bH_{\phi_{v}^{\gamma}}=
 \gamma^{-1}\bH_{\phi_{v}}\gamma=
  \gamma^{-1}\rho(g_{v}^{-1}\bH_{v}g_{v})\gamma.\label{eq:isotropy group equal to sec. factor}
\end{equation}

\begin{lem}
\label{lem:isotropy subgroup determine}Let $\phi,\phi':\bQ^{d-1}\ra\bQ$
be two non-degenerate quadratic maps and assume that $\bH_{\phi}=\bH_{\phi'}$ as
$\bQ$-algebraic subgroups of ${\rm SL}_{d-1}$. Then there exists
$r\in\bQ^{\times}$ such that $\phi=r\phi'$.\end{lem}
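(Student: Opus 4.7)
The plan is to reduce the statement to an elementary linear-algebra fact about antisymmetric matrices acted on by left multiplication. Let $M$ and $M'$ denote the (invertible, symmetric) Gram matrices of $\phi$ and $\phi'$, so that $\phi(u)=u^tMu$ and $\phi'(u)=u^tM'u$. It suffices to prove that $M'=rM$ for some nonzero $r\in\bQ$, since then $\phi'=r\phi$ at once.

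First I would pass from the algebraic groups to their Lie algebras. Differentiating the defining equation $g^tMg=M$ of $\bH_\phi$ at the identity gives
\[
\mathfrak{h}_\phi=\set{X\in\Mat_{d-1}(\bQ):X^tM+MX=0}=M^{-1}\cdot\mathcal{A}_{d-1},
\]
where $\mathcal{A}_{d-1}$ is the space of antisymmetric $(d-1)\times(d-1)$-matrices (the trace-zero condition on $\mathfrak{sl}_{d-1}$ is automatic because $M^{-1}$ is symmetric and the trace of a symmetric times antisymmetric matrix vanishes). The hypothesis $\bH_\phi=\bH_{\phi'}$ therefore translates into $M^{-1}\mathcal{A}_{d-1}=(M')^{-1}\mathcal{A}_{d-1}$, or equivalently, that the matrix $A:=M'M^{-1}\in\GL_{d-1}(\bQ)$ satisfies $A\cdot\mathcal{A}_{d-1}\subseteq\mathcal{A}_{d-1}$.

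The heart of the proof is then the following claim: any $A\in\GL_{d-1}(\bQ)$ with $AB\in\mathcal{A}_{d-1}$ for every $B\in\mathcal{A}_{d-1}$ must be a scalar matrix. When $d-1=2$, the space $\mathcal{A}_2$ is one-dimensional, spanned by $J=\smallmat{0 & 1 \\ -1 & 0}$, so $AJ=cJ$ directly forces $A=cI$. When $d-1\geq 3$, I would rewrite the preservation condition as $AB=-(AB)^t=BA^t$ and specialize to the elementary antisymmetric matrices $B=E_{ij}-E_{ji}$ with $i\neq j$. Comparing the two sides entry by entry yields both $A_{ii}=A_{jj}$ for every pair $i\neq j$ and $A_{ki}=0$ whenever $k\neq i$, so $A=rI$ for some $r\in\bQ^\times$ (nonzero by invertibility). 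Consequently $M'=rM$ and $\phi'=r\phi$, as desired.

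I expect the main ``obstacle'' to be purely cosmetic: namely, carrying out the entry-by-entry comparison uniformly for all $d-1\geq 3$ and verifying that the relations produced pin down every entry of $A$. The passage from algebraic groups to Lie algebras is standard, and the $d-1=2$ case is essentially immediate once the Lie-algebra reformulation is in hand.
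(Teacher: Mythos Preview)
Your argument is correct and takes a genuinely different route from the paper's. The paper passes to $\bC$, normalises so that $M_\phi$ is the identity, and then uses the action $A.M=A^tMA$ of $\SO_2(\bC)$ on the $2\times 2$ blocks $M_{\phi'}^{ij}$ to force $M_{\phi'}$ to be scalar. You stay over $\bQ$, pass to Lie algebras, and reduce everything to the purely linear claim that an invertible matrix preserving the space of antisymmetric matrices under left multiplication is scalar. Your approach avoids the base change to $\bC$ and the diagonalisation of $M_\phi$, at the cost of invoking the (standard) equivalence between equality of connected algebraic groups and equality of their Lie algebras; the paper's approach trades that step for a short explicit computation with group elements rather than infinitesimal ones. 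Both are clean; yours has the minor advantage of working uniformly over any field of characteristic not $2$ without modification.
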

\begin{proof}
It is enough to prove this statement over $\bC$. Thus, we can assume
that $M_{\phi}$ is the identity matrix. We need to show that $M_{\phi'}$
is a scalar matrix. Fix $1\leq i<j\leq d-1$ and let $M_{\phi}^{ij}$
be the 2 by 2 matrix whose entries are the $ii,ij,ji,jj$ entries
of $M_{\phi}$ and similarly for $M_{\phi'}^{ij}$. Acting on matrices
with $A.M=A^{t}MA$, $M_{\phi}^{ij}$ is preserved by ${\rm SO_{2}(\bC)}$,
and so is $M_{\phi'}^{ij}$ by our assumption. A direct calculation
show that this implies that $M_{\phi'}^{ij}={\rm diag}(r,r)$ for
some $r\neq0$. Applying this argument for all possible $i\neq j$
implies the claim.\end{proof}

%\begin{prop}
%\label{prop:norm relation}If $\bH_{\phi_{v}^{\gamma_{v}}}=\bH_{\phi_{u}^{\gamma_{u}}}$
%for some $v,u\in\bZ_{\textrm{prim}}^{d}$ and some $\gamma_{v},\gamma_{u}\in{\rm SL}_{d-1}(\bZ^S)$,
%then $\frac{\norm{v}^{2}}{\norm{u}^{2}}\in(\bZ^S)^{\times}$.\end{prop}
%\begin{proof}
%By Lemma \ref{lem:isotropy subgroup determine} $\phi_{v}^{\gamma_{v}}=r\phi_{u}^{\gamma_{u}}$
%for some~$r\in\bQ^\times$.
%Using the standard basis of $\bZ^{d-1}$, it follows that $\gamma_{v}^{t}M_{\phi_{v}}\gamma_{v}=r\cdot\gamma_{u}^{t}M_{\phi_{u}}\gamma_{u}$.
%By Lemma \ref{lem:determinant and primitivity} there exist $w_{1},w_{2}\in\bZ^{d-1}$
%with $w_{1}^{t}M_{\phi_{u}}w_{2}=1$. Since $\gamma_{v},\gamma_{u}\in{\rm SL}_{d-1}(\bZ^S)$
%it follows that $r\in\bZ^S$. Switching the roles of~$v$ and~$u$ we see that
%$r^{-1}\in\bZ^S$ and so $r\in(\bZ^S)^{\times}$. By Lemma \ref{lem:determinant and primitivity},
%$\det M_{\phi_{v}}=\norm{v}^{2}$ and similarly for $u$. Noting that
%$\det\gamma_{v}=\det\gamma_{u}=1$, we get $\norm{v}^{2}=r^{d-1}\norm{u}^{2}$
%and the claim follows. \end{proof}

\begin{lem}
\label{lem:Isotropy of Hv }Let $v\in\bS^{d-1}(D)$ and recall that  $p\neq 2$. Then the Lie algebra of  $\bH_{v}$ (resp.\ $\rho(g_{v}^{-1}\bH_{v}g_{v})$) is a maximal  semisimple Lie sub-algebra of the Lie algebra of $\bG_{1}$ (resp.\ $\overline{\bG_2}$) with finite centralizer. For $d>5$ the group $\bH_{v}(\bQ_p)$  is $\bQ_p$-simple and isotropic. The same holds for $d=4$ when $D\in\bD(p)$. For $d=5$ under the assumption $D\in\bD(p)$, $\bH_{v}(\bQ_p)$  is semisimple with each $\bQ_p$-simple almost direct factor being isotropic.\end{lem}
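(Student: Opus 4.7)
The plan is to identify $\bH_v$ with $\bH_{\phi_v}=\SO(\phi_v)$ as algebraic groups via \eqref{eq:isotropy group equal to sec. factor}, and then to read off the required structural properties from the classification of quadratic forms over $\bQ$ and $\bQ_p$, using Lemma~\ref{lem:determinant and primitivity} to control $\phi_v$ at the prime $p$.

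For the maximality and finite centralizer claims, the key point is the irreducibility of the natural $\mathfrak{so}_{d-1}$-module complementary to the subalgebra in each ambient algebra: the complement of $\mathfrak{so}(Q_0|_{v^\perp})$ inside $\mathfrak{so}_d$ is the standard $(d-1)$-dimensional representation, and the complement of $\mathfrak{so}(\phi_v)$ inside $\mathfrak{sl}_{d-1}$ (splitting via self-adjointness for $\phi_v$) is the space of traceless $\phi_v$-symmetric matrices, which is the irreducible harmonic part of $\mathrm{Sym}^2$ of the standard representation. Any intermediate Lie subalgebra projects non-trivially, hence surjectively, onto this irreducible quotient and thus must equal the ambient algebra. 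The centralizer is forced to act by scalars on the standard representation by Schur's lemma, and so its intersection with the ambient special group is a finite group of scalars.

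For the $\bQ_p$-structure, I would split according to the dimension. When $d > 5$, we have $d - 1 \geq 5$ and $\mathfrak{so}_{d-1}$ is of Dynkin type $B_r$ or $D_r$ with $r \geq 2$, in particular absolutely simple (recall $D_3 \cong A_3$), so $\bH_v$ is $\bQ_p$-simple; isotropy is then the classical fact that every non-degenerate quadratic form in at least five variables over $\bQ_p$ is isotropic. When $d = 4$ or $d = 5$ and $D \in \bD(p)$, the determinant $\det\phi_v = D$ of Lemma~\ref{lem:determinant and primitivity} is a $p$-adic unit. Since $p$ is odd, $\phi_v$ admits a $\bZ_p$-diagonalization $\langle u_1,\ldots,u_{d-1}\rangle$ with $u_i \in \bZ_p^\times$, whose reduction modulo $p$ is a non-degenerate form in at least three variables over $\bF_p$. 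The latter has a non-trivial zero by Chevalley--Warning, which lifts to an isotropic vector of $\phi_v$ over $\bQ_p$ by Hensel's lemma. For $d = 4$, $\mathfrak{so}_3 \cong \mathfrak{sl}_2$ is absolutely simple, which completes this case.

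The step requiring the most care is $d = 5$, where $\mathfrak{so}_4\cong\mathfrak{sl}_2\oplus\mathfrak{sl}_2$ over the algebraic closure and $\bH_v$ is only semisimple, so one has to identify its $\bQ_p$-simple almost direct factors and verify their isotropy individually. The key input is that for odd $p$ the Hilbert symbol $(u,u')_p$ vanishes on pairs of units, so the Hasse invariant of the unit-diagonal form $\phi_v$ is trivial. Combined with the classification of quaternary quadratic forms over $\bQ_p$, this forces $\phi_v$ to be hyperbolic over $\bQ_p$ when $D \in (\bQ_p^\times)^2$ and to become hyperbolic over the unramified quadratic extension $K = \bQ_p(\sqrt{D})$ otherwise. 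In the first sub-case $\mathrm{Spin}(\phi_v) \cong \SL_2\times\SL_2$ over $\bQ_p$ yields two $\bQ_p$-isotropic simple factors; in the second $\mathrm{Spin}(\phi_v)\cong\mathrm{Res}_{K/\bQ_p}\SL_2$ is a single $\bQ_p$-simple factor of $\bQ_p$-rank one, again isotropic.
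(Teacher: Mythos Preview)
Your proof is correct and follows the same overall strategy as the paper's: identify $\bH_v$ with the orthogonal group of the restriction $Q_0|_{v^\perp}$ (equivalently $\phi_v$), and then extract the structural properties from the theory of quadratic forms over $\bQ_p$, handling $d=5$ separately via the exceptional isomorphism for $\mathfrak{so}_4$.

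The differences are in presentation rather than substance. For maximality the paper simply cites Dynkin's classification, whereas you supply a direct argument via the irreducibility of the complementary $\mathfrak{so}_{d-1}$-module (the standard representation inside $\mathfrak{so}_d$, resp.\ the traceless $\phi_v$-symmetric tensors inside $\mathfrak{sl}_{d-1}$); this is more self-contained and arguably more informative. For isotropy when $d\in\{4,5\}$ the paper computes the Hasse invariant (quoting \cite{KIT99}) and deduces isotropy from the classification; your reduction-mod-$p$ plus Chevalley--Warning plus Hensel argument reaches the same conclusion without invoking the Hasse symbol at that stage, using only that the $\bZ_p$-diagonalized form has unit entries because $p\nmid D=\det\phi_v$. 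For the $d=5$ dichotomy the paper splits according to Witt index $1$ versus $2$, while you split according to whether $D\in(\bQ_p^\times)^2$; since you have already observed that the Hasse invariant is trivial, these two dichotomies coincide, and both arrive at the same identifications $\mathrm{Spin}(\phi_v)\cong\SL_2\times\SL_2$ in the split case and $\mathrm{Spin}(\phi_v)\cong\mathrm{Res}_{\bQ_p(\sqrt{D})/\bQ_p}\SL_2$ otherwise. Finally, your Schur-lemma argument for the finite centralizer and the paper's one-line deduction from maximality (``otherwise the product with the centralizer would be a proper overgroup'') are interchangeable.
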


\begin{proof}
Maximality goes back to a classification made in 1952 by Dynkin \cite{dynkin1952maximal} whose english translation may be found at~\cite{dynkin2000maximal}. 
As being isotropic is preserved by conjugation by $g_v$,  it is enough to prove the statements for $\bH_{v}$. The group $\bH_{v}$ is naturally the orthogonal group of the $d-1$ dimensional quadratic lattice $Q_v\defi(Q_0,\Lambda_v)$. For $d-1\geq 5$,  $Q_v$ is automatically isotropic over $\bQ_p$.  For $d=4,5$ we have seen in Lemma~\ref{lem:determinant and primitivity} that  $Q_v$ has discriminant $D$. Denote the Hasse invariant of $Q_v$ by $S(Q_v)$.  Note that for $p\neq 2$ the congruence condition $p\nmid D$ implies that $S(Q_v)=1$ (use \cite[Theorem~3.3.1(d)]{KIT99}). This, in turn, implies that $Q_v$ is isotropic (use \cite[Theorem~3.5.1]{KIT99}).

Note that for $d>5$ or $d=4$ the algebraic group $\bH_v$ is absolutely simple so the lemma follows in this case. 
The case $d=5$ requires special attention as the Lie algebra ${\mathfrak s\mathfrak o}_4 $ is isomorphic to ${\mathfrak s\mathfrak l}_2\times {\mathfrak s\mathfrak l}_2$ over the algebraic closure. As explained above, under the assumption that $p\nmid D$  the algebraic group $\bH_v$ is $\bQ_p$-isotropic. Note that the $\bQ_p$-rank of ${\rm Lie}(\bH_v)$ is equal to the isotropy rank of $Q_v$. Thus this rank is either 1 or 2.

If the isotropy rank is 2, then $\bH_v(\bQ_p)$ is conjugate to the split special orthogonal group $\SO(xy+zw)$ over $\bQ_p$ (use \cite[\S 2.2]{cassels:RQF}). In this case  ${\rm Lie}(\bH_v)$ is $\bQ_p$-isomorphic to the Lie algebra ${\mathfrak s\mathfrak l}_2\times {\mathfrak s\mathfrak l}_2$ over $\bQ_p$ (as
may be seen by studying the representation of $\SL_2(\bQ_p)\times\SL_2(\bQ_p)$ by multiplication
on both sides on $\Mat_{22}(\bQ_p)$ together with the quadratic form $\det$). Therefore each of its almost direct simple factors is $\bQ_p$-isotropic.

If the isotropy rank of $Q_v$ is $1$ then $\bH_v(\bQ_p)$ is conjugate to the orthogonal group of the quaternary quadratic form $xy-(z^2-\eta w^2) $ for some $\eta\in \bQ_p^\times\setminus \pa{\bQ_p^\times}^2$. This implies that $\bH_v(\bQ_p)$ is simple as its Lie algebra is $\bQ_p$-isomorphic the Lie algebra of ${\rm Res}_{\bQ_p(\sqrt{\eta})/\bQ_p}\SL_2$.
If fact, let $\bF=\bQ_p(\sqrt{\eta})$. Then
the isomorphism\footnote{This is the $p$-adic analog of the isogeny from $\SL_2(\bC)$ to $\SO^+(3,1)(\bR)$.} arises from the representation of $\SL_2(\bF)$
on 
\[
 V=\{A\in\operatorname{Mat}_{22}(\bF)\mid A^*=A\} 
\]
(with $A^*=\overline{A^T}$ denoting the Galois conjugate of the transpose matrix) defined by $g.A=g A g^*$
for all $g\in\SL_2(\bF)$ and $A\in V$. Since $\det g=1$ this preserves the quadratic form $\det$
on $V$, and since 
\[
 \det\begin{pmatrix}x&z+\sqrt{\eta}w\\ z-\sqrt{\eta}w &y\end{pmatrix} 
  =xy-(z^2-\eta w^2)
 \]
the isomorphism of the Lie algebras follows.

Finally, the finiteness of the centralizer follows as otherwise its product with $\bH_v$ would give a proper subgroup containing the maximal group $\bH_v$. 
\end{proof}

\subsection{Limits of algebraic measures}

Let $\bG\subset\SL_{k}$ (for some integer~$k$) be a connected semisimple $\bQ$-group, $S$ a finite set
of valuations containing all the valuations for which $\bG(\bQ_{v})$
is compact,~$\bG_S=\bG(\bQ_S)$ and $\Gamma$ a finite-index subgroup of $\bG(\bZ^S)=\bG(\bQ_S)\cap\SL_k(\bZ^S)$.
Let $X_{S}\defi \bG_{S}/\Gamma$ and let $\cP(X_{S})$ denote the space of probability
measures on $X_{S}$.

Mozes and Shah showed in \cite{MS95} that limits of algebraic probability measure
are again algebraic if some unipotent flows act ergodically for each of the measures
in the sequence. We are going to use the following analogue for~$S$-arithmetic quotients
obtained by Gorodnik and Oh, which we state here in a slightly simplified
version. 

\begin{thm}[{\cite[Theorem 4.6]{GorOh2011}}]
\label{thm:gorod-oh}Let ${\mathbf L}_{i}$ be a sequence of connected
semisimple $\bQ$-subgroups of $\bG$ and assume that there exists $p\in S$
such that for any ${\mathbf L}_{i}$ and any non-trivial normal subgroup ${\mathbf N}<{\mathbf L}_{i}$ which is defined over $\bQ$,
${\mathbf N}(\bQ_{p})$ is non-compact. Let ${g_{i}}$ be a sequence of
elements of $\bG_{S}$ and set $\nu_{i}\defi\mu_{g_{i}{\mathbf L}_{i,S}^{+}\Gamma}$.
If the centralizers of all ${\mathbf L}_{i}$ are $\bQ$-anisotropic, then 
$\{\nu_{1},\nu_2,\ldots\}$ is relatively compact in ${\mathcal P}(X_{S})$. 
Assume that $\nu_{i}$ weakly converge to $\nu$ in ${\mathcal P}(X_{S})$, then the
following statements hold:
\begin{enumerate}
\item There exists a Zariski connected $\bQ$-algebraic subgroup $\bM$ of $\bG$ such that $\nu=\mu_{gM\Gamma}$
where $M$ is a closed finite-index subgroup of $\bM_{S}$ and $g\in \bG_{S}$.
If the centralizers of all~$\mathbf L_i$ are~$\bQ$-anisotropic, then~$\bM$ is semi-simple.
\item There exists a sequence ${\gamma_{i}}\subset\Gamma$ such that
for all $i$ sufficiently large we have $\gamma_{i}^{-1}{\mathbf L}_{i}\gamma_{i}\subset\bM$.
\item There exists a sequence $h_{i}\in{\mathbf L}_{i,S}^{+}$ such that $g_{i}h_{i}\gamma_{i}$
converges to $g$ as $i\ra\infty$.
\end{enumerate}
\end{thm}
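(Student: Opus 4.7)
The plan is to follow the strategy of Mozes--Shah in the real setting, now lifted to the $S$-arithmetic framework using the measure classification of Ratner and of Margulis--Tomanov for unipotent flows on $S$-arithmetic quotients. There are three assertions to prove: relative compactness of $\{\nu_i\}$, the algebraic identification of any weak-$*$ limit $\nu$, and the constructive conjugation/translation data in parts (2) and (3). My starting point is that the hypothesis on $\mathbf{N}(\bQ_p)$ forces each $\mathbf{L}_i$ to be isotropic on every $\bQ$-simple factor at the place $p$, so by Borel--Tits the group $\mathbf{L}_{i,S}^+$ is generated by the $\bQ_S$-points of its one-parameter unipotent $\bQ$-subgroups. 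This is what makes unipotent dynamics available for every $\nu_i$.

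For relative compactness, I would invoke the $S$-arithmetic Dani--Margulis non-divergence theorem applied to the unipotent subgroups generating $\mathbf{L}_{i,S}^+$. Since the centralizers of the $\mathbf{L}_i$ are $\bQ$-anisotropic, one can show using a Chevalley representation of $\bG$ that no fixed $\bQ$-parabolic can trap the unipotent orbits, yielding a uniform-in-$i$ quantitative non-divergence estimate, hence tightness. To identify a limit $\nu = \lim \nu_i$, extract (after passing to a subsequence) a limit unipotent one-parameter subgroup $U \subset \bG_S$ obtained as a suitable rescaled limit of one-parameter unipotent subgroups of $\mathbf{L}_{i,S}^+$; the measure $\nu$ is $U$-invariant. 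The Ratner--Margulis--Tomanov classification then states that every ergodic $U$-invariant probability measure on $X_S$ is of the form $\mu_{gM\Gamma}$ for a closed subgroup $M$ containing $U$ with $\bM = \overline{M}^{\mathrm{Zar}}$ a $\bQ$-algebraic subgroup and $M$ a closed finite-index subgroup of $\bM_S$. An ergodic decomposition plus the fact that $\nu$ is a limit of homogeneous measures reduces the identification of $\nu$ to pinning down a single such $\bM$.

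The main obstacle, and the technical heart of the argument, is the linearization step that prevents mass from escaping into proper orbits of the form $g'M'\Gamma$ with $\bM' \subsetneq \bM$. For each candidate proper $\bQ$-algebraic subgroup $\bH < \bM$, one uses a representation $\bG \to \GL(V)$ whose highest-weight vector $v_{\bH} \in V(\bQ)$ has stabilizer normalizing $\bH$, so that orbits approaching $\bH$-orbits correspond to vectors accumulating on $\bG_S \cdot v_{\bH}$. The key linearization lemma (the $S$-arithmetic analogue of Dani--Margulis--Ratner) gives uniform polynomial control on the time a unipotent trajectory can spend near such a singular set. Combined with the non-divergence estimates, this forces the limit $\nu$ to charge only the \emph{generic} part of the $\bM$-orbit. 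This is what I expect to be the main difficulty: implementing the linearization across all the $\bQ_v$ ($v \in S$) simultaneously requires choosing $v_{\bH}$ integrally over $\bZ^S$ and controlling the orbit closure of $\bH(\bQ_S)$ on $V \otimes \bQ_S$; here one must exploit that, up to conjugation by elements of $\Gamma$, only finitely many conjugacy classes of such $\bH$ need be considered, and that the relevant epimorphism $\mathbf{L}_i \to \bH$ data is discrete.

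Once $\nu = \mu_{gM\Gamma}$ is established, the semisimplicity of $\bM$ under the anisotropic-centralizer hypothesis follows because any normal torus of $\bM$ would be centralized by each conjugate $\gamma_i^{-1}\mathbf{L}_i\gamma_i$ sitting inside it, contradicting $\bQ$-anisotropy of the centralizer through a finiteness argument on $\Gamma$-orbits of tori. Part (2) is an output of the linearization: the $\gamma_i$ are the lattice elements provided by the linearization lemma that push the orbit $g_i\mathbf{L}_{i,S}^+\Gamma$ to lie within a neighborhood of $gM\Gamma$ transverse to $\bM_S$. Part (3) follows by combining (2) with the non-divergence estimates: once $\gamma_i^{-1}\mathbf{L}_i\gamma_i \subset \bM$, the orbits $g_i\mathbf{L}_{i,S}^+\gamma_i\Gamma$ are $\bM_S^+$-orbits that weak-$*$ converge to $gM\Gamma$, and tightness upgrades this to the existence of $h_i \in \mathbf{L}_{i,S}^+$ with $g_ih_i\gamma_i \to g$ in $\bG_S$.
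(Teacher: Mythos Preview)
The paper does not give its own proof of this theorem: it is quoted verbatim (in slightly simplified form) from Gorodnik--Oh \cite[Theorem~4.6]{GorOh2011} and used as a black box throughout \S\ref{sec:proof}. So there is nothing to compare your attempt against in this paper.

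That said, your outline is a reasonable summary of the Mozes--Shah strategy transported to the $S$-arithmetic setting, and this is indeed the shape of the argument in \cite{GorOh2011}, which in turn rests on the measure-classification results of Ratner \cite{RAT95} and Margulis--Tomanov \cite{MT94}. Your identification of the non-divergence and linearization steps as the technical core is accurate. One small correction: the semisimplicity of $\bM$ under the anisotropic-centralizer hypothesis is not argued via ``normal tori of $\bM$'' as you suggest, but rather by observing that the radical of $\bM$ would be contained in the centralizer of each $\gamma_i^{-1}\mathbf{L}_i\gamma_i$ (since $\mathbf{L}_i$ is semisimple and contained in $\bM$), hence $\bQ$-anisotropic, and then that a connected $\bQ$-anisotropic solvable group has no nontrivial $\bQ_p$-unipotents, forcing the radical to be trivial once one knows $\bM_S^+$ acts ergodically on the limit orbit. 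For the present paper, however, none of this internal structure is needed---only the statement is used.
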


Theorem \ref{thm:gorod-oh} deals with orbits of groups of the form ${\mathbf L}_S^+$ whereas Theorem~\ref{thm:Hom spaces equi} deals with orbits of groups of the form ${\mathbf L}_S$. Therefore we collect some information on the quotient ${\mathbf L}_S/{\mathbf L}_S^+$.
\subsection{The spinor norm} \label{sec:spinor}
Throughout this section, we fix a regular rational quadratic form $(\bQ^d,q)$. For any field $\bQ\subset F$ we have the spinor-norm map 
$$ \phi=\phi_{\SO_q}:\SO_q(F)\ra F^\times/\pa{F^\times}^2$$
defined as follows: Any element $g\in\SO_q(F)$ can be written as $g=\tau_{v_1}\cdots\tau_{v_r}$ where $v_1,\ldots,v_r\in F^d$, $r$ is even and $\tau_v$ is  a reflection (also called symmetry) with respect to $v$ (see \cite[Ch.2 \S 4]{cassels:RQF}). The spinor-norm is defined as $\phi(g)=\prod_{i=1}^r q(v_i)\pa{F^\times}^2$.

\begin{lem}\label{lem:spin}
Let $F=\bQ_p$ for an odd prime $p$ or $F=\bR$.
\begin{enumerate}
\item\label{spin seq} We have the short exact sequence 
$$ {\rm Spin}_q(F)\slra{\pi_{\SO_q}}\SO_q(F)\slra{\phi} F^\times/\pa{F^\times}^2,$$
where ${\rm Spin}_q$ denotes the simply-connected cover of $\SO_q$. 
\item\label{spin real} In particular $\pi_{\SO_q}$ is onto when  $F=\bR$ and $q$ is positive-definite.
\item \label{spin onto}For $\bF=\bQ_p$ for odd $p$, and $d\geq 3$, the map $\phi$ is onto $\bQ_p^\times/\pa{\bQ_p^\times}^2$. The latter is a group of size four whose elements are denoted as $\set{1,r,p,rp}$.
\item \label{spin unipotents}If $q$ is isotropic then the group $\SO_q(F)^+$  is the group generated by one-dimensional unipotent subgroups.
\item \label{spin restriction} We have $\phi_{\bH_v}=\phi_{\bG_1}|_{\bH_v(F)}$.
\end{enumerate}
\end{lem}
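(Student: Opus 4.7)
I would organise the proof part by part, since the five claims are largely independent but are conveniently ordered by increasing specificity. Part~(\ref{spin seq}) is the standard description of the spinor norm as the precise obstruction to lifting $\SO_q(F)$ to the simply connected cover; I would simply invoke it from \cite{cassels:RQF} rather than rederive the Clifford-algebra computation. Granted this, part~(\ref{spin real}) is immediate: over $\bR$ with $q$ positive definite every factor $q(v_i)$ in the spinor-norm formula is strictly positive, hence a square in $\bR^\times$, so $\phi$ is trivial and exactness forces $\pi_{\SO_q}$ to be surjective.

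For part~(\ref{spin onto}), I would first recall that for odd $p$ the subgroup $1+p\bZ_p$ is uniquely $2$-divisible, so $\bZ_p^\times/(\bZ_p^\times)^2\cong\bZ/2$; combined with the valuation this gives $\bQ_p^\times/(\bQ_p^\times)^2\cong\bZ/2\times\bZ/2$ with the four stated representatives. For the surjectivity of $\phi$, since $d\geq 3$ a non-degenerate quadratic form in $d$ variables over $\bQ_p$ represents every class in $\bQ_p^\times$ (a standard consequence of the classification of $\bQ_p$-forms); thus, given a target class $c$, pick any $v_1\in\bQ_p^d$ with $q(v_1)=a\neq 0$ and any linearly independent $v_2$ with $q(v_2)=ca$, and check that $\tau_{v_1}\tau_{v_2}\in\SO_q(\bQ_p)$ has spinor norm $a\cdot ca\equiv c$ modulo $(\bQ_p^\times)^2$.

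Part~(\ref{spin unipotents}) is precisely the content of \cite[Cor.~6.5]{BT73}, already invoked by the paper in the discussion of $\bP_S^+$: for an $F$-isotropic semisimple group the image of the simply connected cover equals the subgroup generated by the $F$-points of one-parameter unipotent subgroups, so no further work is needed beyond matching the two descriptions of $\SO_q(F)^+$ via part~(\ref{spin seq}).

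Finally, for part~(\ref{spin restriction}) the key observation is that every $h\in\bH_v(F)$ acts as the identity on $Fv$ and as an element of $\SO(q|_{v^\perp})(F)$ on $v^\perp$; applying Cartan--Dieudonn\'e inside $\bH_v$ one obtains a factorisation $h=\tau_{u_1}\cdots\tau_{u_r}$ with each $u_i\in v^\perp$, and under the natural inclusion $\bH_v\hookrightarrow\bG_1$ each such $\tau_{u_i}|_{v^\perp}$ extends by the identity on $Fv$ to the full reflection $\tau_{u_i}$ on $F^d$, precisely because $u_i\perp v$. Hence the same vectors compute $\phi_{\bH_v}(h)$ and $\phi_{\bG_1}(h)$, and the two spinor norms agree on $\bH_v(F)$. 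The only step that requires genuine checking rather than a direct citation is part~(\ref{spin onto}), where one has to verify the representability statement for $\bQ_p$-forms and ensure that $v_2$ can be taken linearly independent of $v_1$ with the prescribed square class of $q(v_2)$; this is the main (mild) obstacle, but it goes through for $d\geq 3$ by diagonalising $q$ and exhibiting $v_2$ explicitly.
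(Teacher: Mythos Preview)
Your plan matches the paper's proof almost exactly: both cite Cassels for~(\ref{spin seq}), deduce~(\ref{spin real}) immediately (the paper just says ``readily follows''; your observation that $q(v_i)>0$ is the obvious reason), outsource~(\ref{spin onto}) and~(\ref{spin unipotents}) to the literature, and derive~(\ref{spin restriction}) from the definition of $\phi$ via reflections in $v^\perp$. The only difference is in the citations for~(\ref{spin onto}) and~(\ref{spin unipotents}): the paper invokes \cite[Lemma~1]{EV08} for both, while you argue~(\ref{spin onto}) by hand and cite \cite[Cor.~6.5]{BT73} for~(\ref{spin unipotents}); both routes are fine, and the paper itself flags the isotropic case of~(\ref{spin onto}) as easy for exactly your reason.

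One small correction to your direct argument for~(\ref{spin onto}): the claim that a non-degenerate form in $d\ge 3$ variables over $\bQ_p$ represents \emph{every} square class is false for $d=3$ --- an anisotropic ternary form over $\bQ_p$ misses exactly one of the four classes. Your construction $\tau_{v_1}\tau_{v_2}$ still yields surjectivity, because any three elements of the Klein four-group $\bQ_p^\times/(\bQ_p^\times)^2$ have pairwise products covering all four classes; but you should phrase it this way rather than asserting full representability. (Alternatively, for $d\ge 4$ every form is universal, and for $d=3$ in the paper's actual application $q=\sum x_i^2$ is isotropic over $\bQ_p$ for odd $p$, hence universal --- which is why the paper singles out the isotropic case as the one of interest.)
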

\begin{proof}
Part \ref{spin seq} is proved in \cite[Ch.\ 10 Thm.\  3.3]{cassels:RQF} and Part \ref{spin real} readily follows from it.
Parts \ref{spin onto} and \ref{spin unipotents} are proved in \cite[Lemma 1]{EV08}. Note that Part \ref{spin onto} follows easily when $q$ is isotropic (as $q$ achieves any value in $F$), which is the case of interest for our application.
Part \ref{spin restriction} follows from the definition of $\phi$ and the fact that $\bH_v$ is the orthogonal group of the restriction of $\sum_{i=1}^d x_i^2$ to $v^\perp$. 
\end{proof}

%The proof of Theorem \ref{thm:Hom spaces equi} is divided into three
%steps. In the first we prove some preliminaries on quadratic
%forms and recall a theorem by Gorodnik and Oh. 
%In the second we will establish the analog of Theorem \ref{thm:Hom spaces equi}
%for the measures $\overline{\mu}_{v,S}$. In the last step,
%we will deduce the desired statement for the measures $\mu_{v,S}$. 

With the following lemma we reduce Theorem \ref{thm:Hom spaces equi} to a statement (see \eqref{hom equi spin}) which is approachable by Theorem \ref{thm:gorod-oh}.
\begin{lem}\label{lem:finite index normal cover}
Let $H',H,G$ be locally compact groups with $H'\lhd H<G$, $[H:H']=k$ and $h_1,\ldots, h_k$ a complete list of representatives of $H'$ cosets in $H$. Let $\Gamma$ be a lattice in $G$ and consider a finite volume orbit $Hg\Gamma$ where $g\in G$. Then 
$$\mu_{Hg\Gamma}=\frac 1k \sum_{i=1}^k\mu_{H'h_ig\Gamma}=\frac 1k \sum_{i=1}^k\mu_{h_iH'g\Gamma}.$$
\end{lem}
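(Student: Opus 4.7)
The second equality is the easier one and I would dispose of it first. Since $H'\lhd H$, for each $i$ we have $h_iH'=H'h_i$ as subsets of $G$, hence $h_iH'g\Gamma=H'h_ig\Gamma$ as subsets of $G/\Gamma$. Both $\mu_{h_iH'g\Gamma}$ and $\mu_{H'h_ig\Gamma}$ are $H'$-invariant probability measures on this common set (the former by viewing it as the orbit of $h_iH'h_i^{-1}=H'$ through $h_ig\Gamma$, as noted in the preliminaries), so they coincide by uniqueness of the Haar measure on a finite volume $H'$-orbit.

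For the main equality, my plan is to invoke uniqueness of the $H$-invariant probability measure on the finite volume orbit $Hg\Gamma$. The left-hand side is this measure by definition, so it suffices to verify that $\nu:=\tfrac{1}{k}\sum_{i=1}^k\mu_{H'h_ig\Gamma}$ is a probability measure on $Hg\Gamma$ which is $H$-invariant. Being an average of probability measures supported on $H'h_ig\Gamma\subset Hh_ig\Gamma=Hg\Gamma$, the first properties are clear.

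The substance is the $H$-invariance. Fix $h\in H$. Because $H'\lhd H$, left multiplication by $h$ permutes the $H'$-cosets, so there is a permutation $\sigma\in S_k$ with $hh_i\in H'h_{\sigma(i)}$; consequently $h\cdot(H'h_ig\Gamma)=H'hh_ig\Gamma=H'h_{\sigma(i)}g\Gamma$. The pushforward of $\mu_{H'h_ig\Gamma}$ under $x\mapsto hx$ is a probability measure on $H'h_{\sigma(i)}g\Gamma$, and it is again $H'$-invariant: if $h'\in H'$ then $h'(hx)=h(h^{-1}h'h)x$ and $h^{-1}h'h\in H'$ by normality. By uniqueness of the Haar measure on this orbit, this pushforward equals $\mu_{H'h_{\sigma(i)}g\Gamma}$. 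Summing over $i$ and using that $\sigma$ is a bijection yields $h_*\nu=\nu$, completing the argument.

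There is no real obstacle here; the only point requiring care is that the supports $H'h_ig\Gamma$ need not be disjoint in $Hg\Gamma$ (indeed some may coincide as sets), but this is harmless since the argument above works purely at the level of measures and only uses normality of $H'$ in $H$ together with the uniqueness of invariant probability measures on closed orbits of finite volume.
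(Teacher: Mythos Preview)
Your proof is correct and follows essentially the same approach as the paper: both arguments verify that the average $\tfrac{1}{k}\sum_i \mu_{H'h_ig\Gamma}$ is an $H$-invariant probability measure on $Hg\Gamma$ and then appeal to uniqueness. The paper compresses the $H$-invariance check into the remark that the average is ``clearly $H'$-invariant and translating by $h_j$ only permutes the summands'', which is exactly the permutation-and-uniqueness argument you spell out in detail for a general $h\in H$.
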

\begin{proof}
Normality implies the second equality. The last two expressions define a probability measure which is supported on $Hg\Gamma$. This measure  is  $H$-invariant as it is clearly $H'$-invariant and translating by $h_j$ only permutes the summands. The lemma follows, since $\mu_{Hg\Gamma}$ is the unique measure with these properties.
\end{proof}
Fix $v\in\bS^{d-1}(D)$. We decompose the orbit in \eqref{defOvS} into orbits involving the group  ${\bL}_{v,S}^{+}$. To this end, recall the notation\footnote{We 
write~$r$ for the image of an element~$\bZ_p^\times$ that is not a square in~$\bZ_p$ and by a slight abuse of notation~$p$ for the image of the uniformizer~$p\in\bQ_p$ in~$\bQ_p^\times/(\bQ_p^\times)^2$.}
$\bQ_p^\times/(\bQ_p^\times)^2=\set{1,r,p,rp}$ and 
fix $S=\set{\infty,p}$ for an odd prime $p$. Let $h_1^v,h_{r}^v,h_{p}^v,h_{rp}^v\in \bH_v(\bQ_p)$ denote elements with $\phi(h_i^v)=i$. Similarly, let  $g_1,g_{r},g_{p},g_{rp}\in \bG_1(\bQ_p)$ denote elements with $\phi(g_i)=i$. Finally, let  $\bG_{S}^{+}$ denote $\pa{\bG_1}_{S}^{+}\times \pa{\bG_2}_S$. For $i\in \set{1,r,p,rp}$ define    
$$\cY^{S,i}\defi \bG_{S}^{+}(e_\infty,g_i,e_\infty,e_p)\bG(\bZ^S),\, \mu_{S}^i\defi \mu_{\cY^{S,i}}$$ 

$$\mathbf{O}_{v,S}^i\defi(k_{v},h_i^v,\theta_{v},g_v^{-1}h_i^vg_v)\bL_{v,S}^+\bG(\bZ^S)\subset\cY^{S,i},\, \mu_{v,S}^i\defi\mu_{\mathbf{O}_{v,S}^i}$$
where the last containment follows from parts (\ref{spin restriction}) and (\ref{spin real}) of Lemma  \ref{lem:spin}.
%\[
%\mathbf{\overline{O}}_{v,S}^{i}\defi(k_{v},h_i^v,\bar{\theta}_{v},e_{f})\overline{\bL}_{v,S}^{+}\overline{\bG}(\bZ^S)\subset\cX^{S}.
%\]

Applying Lemma \ref{lem:finite index normal cover} with $\bL_{v,S}^+ \lhd \bL_{v,S}$ (resp.\  with $\bG_{S}^+ \lhd \bG_{S}$) we see that $\mu_{v,S}^i=\frac 14 \sum_{i\in \set{1,r,p,rp}}\mu_{v,S}^i$ and resp.\ that 
$\mu_{S}^i=\frac 14 \sum_{i\in \set{1,r,p,rp}}\mu_{S}^i$. Therefore Theorem \ref{thm:Hom spaces equi} follows from the following claim. For a sequence $\set{v_n}$ as in Theorem \ref{thm:Hom spaces equi} and for any $i\in   \set{1,r,p,rp}$ we have that
\begin{equation}
\mu_{v_n,S}^i \stackrel{\text{weak}^{*}}{\lra} \mu_{S}^i 
\mbox{ as }n\ra \infty. \label{hom equi spin}
\end{equation}
\section{Proof of Theorem \ref{thm:Hom spaces equi}}\label{sec:proof}
In this section we prove that \eqref{hom equi spin} holds. This will conclude the proof of Theorem \ref{thm:Hom spaces equi}.
\subsection{Step I: Proving \eqref{hom equi spin} for orthogonal
lattices\label{sub:Step I for lattices}}

Recall the definitions of $\overline{\bG}_{2},\overline{\bG},\cX^{S},\pi^{S}$ and $\rho$.
We define 
\[
\overline{\bL}_{v}(R)\defi\set{\pa{h,\rho(g_{v}^{-1}hg_{v})}:h\in\bH_{v}(R)}.
\]
and $\bar{\theta}_{v}=\rho(a_{v}k_{v}g_{v})$
and for $i\in \set{1,r,p,rp}$  
$$\cX^{S,i}\defi \overline{\bG}_{S}^{+}(e,g_i,e,e)\overline{\bG}(\bZ^S),\, \overline{\mu}_{S}^i\defi \mu_{\cX^{S,i}}$$ 

$$\overline{\mathbf{O}}_{v,S}^i\defi(k_{v},h_i^v,\bar\theta_{v},\rho(g_v^{-1}h_i^vg_v))\overline{\bL}_{v,S}^+\overline{\bG}(\bZ^S)\subset\cX^{S,i},\, \overline{\mu}_{v,S}^i\defi\mu_{\overline{\mathbf{O}}_{v,S}^i}$$
where again the last containment follows from (\ref{spin restriction}) and (\ref{spin real}) of Lemma~\ref{lem:spin}. 
\begin{rem}
\label{rem:projection of measures} For $i\in \set{1,r,p,rp}$ we have $\pi_{*}^{S}(\mu_{v,S}^i)=\overline{\mu}_{v,S}^i$
and $\pi_{*}^{S}(\mu_{S}^i)=\overline{\mu}_{S}^i$.\end{rem}

As a first step we prove that for any $i\in \set{1,r,p,rp}$ we have

\begin{equation}
\overline{\mu}_{v_n,S}^i \stackrel{\text{weak}^{*}}{\lra} \overline{\mu}_{S}^i,\, n\ra \infty. \label{hom equi spin lattices}
\end{equation}

Let~$C=\{v_n:n\in\bN\}$ and $S=\set{\infty,p}$ and fix $i\in \set{1,r,p,rp}$. We apply Theorem~\ref{thm:gorod-oh}
with $\mathbf{L}_{n}=\overline{\bL}_{v_{n}}$ and $g_{n}=(k_{v},h_i^v,\bar\theta_{v},\rho(g_v^{-1}h_i^vg_v))\in\overline{\bG}_{S}$.
By Lemma~\ref{lem:Isotropy of Hv } and the congruence assumption
in Theorem \ref{thm:Hom spaces equi} when $d=4\mbox{\text{ or }5}$
the main assumption to Theorem~\ref{thm:gorod-oh} is satisfied.

Let $\nu$ be a weak$^{*}$ limit of $\pa{\overline{\mu}_{v,S}^i}_{v\in C}$
which is the limit of a subsequence $\pa{\overline{\mu}_{v,S}^i}_{v\in C_{1}}$
for $C_{1}\subset C$. We wish to show that $\nu=\overline{\mu}_{S}^i$.
By Lemma  \ref{lem:Isotropy of Hv } the centralizer of $\bL_{v}$
is finite and  therefore $\bQ$-anisotropic so it follows form Theorem~\ref{thm:gorod-oh} that $\nu$ is a probability measure.

Applying Theorem~\ref{thm:gorod-oh}.(1)--(2), we find a semisimple algebraic
$\bQ$-group $\bM<\overline{\bG}$ and $C_{2}\subset C_{1}$ such
that $\av{C_{1}\setminus C_{2}}<\infty$ and for all $v\in C_{2}$
we have 
\begin{equation}
\gamma_{v}^{-1}\overline{\bL}_{v}\gamma_{v}<\bM\label{eq:subgroups of M}
\end{equation}
 for some $\gamma_{v}=\pa{\delta_v,\eta_v}\in\bG_1\times \overline{\bG}_2(\bZ^S)$.

Assume, for a moment, that $\bM=\overline{\bG}$.  We already noted above that $\overline{\mathbf{O}}_{v,S}^i\subset\cX^{S,i}$ and therefore the weak limit $\nu$ is supported inside $\cX^{S,i}$.
Using Theorem~\ref{thm:gorod-oh}.(1), we see that $\nu=\mu_{gM_{0}\overline{\bG}(\bZ^S)}$
for a subgroup $M_{0}$ of finite-index in $\overline{\bG}_{S}$ and
some $g\in\overline{\bG}_{S}$. The group $\bM_{S}^{+}=\overline{\bG}_{S}^{+}$
is a minimal finite-index subgroup of $\overline{\bG}_{S}$ (using Lemma \ref{lem:spin} minimality follows from \cite[6.7]{BT73}) and therefore
contained in $M_{0}$. Hence the orbit $gM_{0}\overline{\bG}(\bZ^S)$ contains the $\overline{\bG}_{S}^{+}$-orbit $g\overline{\bG}_{S}^{+}\overline{\bG}(\bZ^S)=\overline{\bG}_{S}^{+}g\overline{\bG}(\bZ^S)$ and is contained in the $\overline{\bG}_{S}^{+}$-orbit $\cX^{S,i}$ and therefore equal to it which shows $\nu=\overline{\mu}_{S}^i$.

Therefore, the proof of this step will be concluded once we show:
\begin{claim*}
$\bM=\overline{\bG}$.\end{claim*}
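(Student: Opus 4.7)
The strategy is to project $\bM$ onto each factor of $\overline{\bG}=\bG_1\times\overline{\bG}_2$ and invoke the maximality statements of Lemma~\ref{lem:Isotropy of Hv}. Let $\pi_j\colon\overline{\bG}\to\bG_j$ denote the factor projections and set $\bM_j:=\pi_j(\bM)$; since $\bM$ is semisimple over $\bQ$, so are $\bM_1$ and $\bM_2$. Writing $\gamma_v=(\delta_v,\eta_v)$, the inclusion~\eqref{eq:subgroups of M} combined with the definition of $\overline{\bL}_v$ yields $\delta_v^{-1}\bH_v\delta_v\subset\bM_1$ and $\eta_v^{-1}\rho(g_v^{-1}\bH_v g_v)\eta_v\subset\bM_2$ for every $v\in C_2$. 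By Lemma~\ref{lem:Isotropy of Hv} both conjugated subgroups are maximal semisimple in $\bG_1$ and $\overline{\bG}_2$ respectively (maximality being invariant under algebraic conjugation), so each $\bM_j$ must equal either the corresponding conjugate or the full ambient group $\bG_j$.

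I would first rule out the case $\bM_1=\delta_v^{-1}\bH_v\delta_v$ for every $v\in C_2$. The fixed $\bQ$-subgroup $\bM_1$ stabilizes a unique rational line $\bQ w_0$, so this equality would force $\bH_v={\rm Stab}_{\bG_1}(v)$ to equal ${\rm Stab}_{\bG_1}(\delta_v w_0)$ and hence $v\in\bR\,\delta_v w_0$. Combining $\delta_v\in\SO_d(\bZ[\tfrac{1}{p}])$ being a real isometry with the primitivity of $v\in\bZ^d$ shows that $D_v/\|w_0\|^2$ must be a rational square, so the $v_n$ are confined to a countable sparse $\SO_d(\bZ^S)$-orbit of lines through a scalar multiple of $w_0$. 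The growth $\|v_n\|\to\infty$ forces the $p$-adic denominators of $\delta_{v_n}$ to blow up, and a rigidity argument exploiting the homogeneity of $\overline{\mathbf{O}}_{v,S}^i$ then leads to a contradiction. The analogous argument yields $\bM_2=\overline{\bG}_2$.

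To conclude, I would apply a Goursat-style argument. For $d\geq 5$ the group $\bG_1=\SO_d$ is almost $\bQ$-simple and $\overline{\bG}_2=\SL_{d-1}$ is $\bQ$-simple; any semisimple $\bM\subset\bG_1\times\overline{\bG}_2$ with surjective projections onto both factors is therefore either the whole product or the preimage of the graph of an isogeny between simple quotients of $\bG_1$ and $\overline{\bG}_2$. But $\dim\bG_1=\tfrac{d(d-1)}{2}$ and $\dim\overline{\bG}_2=d^2-2d$ are unequal for $d\geq 4$, ruling out any such isogeny. Hence $\bM=\overline{\bG}$.

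The principal obstacle is the middle step, where the proper alternatives for $\bM_j$ must be excluded. Morally this amounts to an equidistribution statement on each factor separately --- Linnik-style equidistribution of primitive directions on $\bS^{d-1}$, and equidistribution of the shapes of the orthogonal lattices --- so this step carries most of the genuine arithmetic content of the claim, while the Goursat step is essentially a dimension-count.
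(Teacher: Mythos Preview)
Your overall architecture matches the paper's: project to $\bM_1,\bM_2$, use the maximality from Lemma~\ref{lem:Isotropy of Hv }, rule out the proper alternatives, and then run a Goursat argument. The Goursat step is essentially the same as the paper's observation that $\bG_1$ and $\overline{\bG}_2$ have no isogenous simple factors (your dimension count is a special case of this; note that you should also cover $d=4$, where $\SO_4$ is not simple but its $\SL_2$-factors are still not isogenous to $\SL_3$).

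The genuine gap is the middle step, as you yourself flag. Your sentence ``the growth $\|v_n\|\to\infty$ forces the $p$-adic denominators of $\delta_{v_n}$ to blow up, and a rigidity argument \dots\ leads to a contradiction'' is not a proof, and the phrase ``amounts to an equidistribution statement on each factor'' would be circular here. The paper does \emph{not} appeal to any separate equidistribution; instead it uses the remaining structural output of Theorem~\ref{thm:gorod-oh}, namely part~(3), which you have not invoked. Concretely: if $\bM_1\neq\bG_1$ then for a fixed $v$ and all $u\in C_2$ one has $\delta_v^{-1}v=\alpha_u\,\delta_u^{-1}u$; primitivity forces $\alpha_u\in(\bZ^S)^\times=\{\pm p^n\}$, and applying the convergent sequence $h_u^i h_u\delta_u\to g$ from Theorem~\ref{thm:gorod-oh}(3) to this identity in $\bQ_p^d$ shows $\alpha_u u$ converges $p$-adically, hence $|\alpha_u|_p$ stays bounded. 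But then $|\alpha_u|$ is bounded above and below as a real number, contradicting $\|v\|=|\alpha_u|\,\|u\|$ with $\|u\|\to\infty$.

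Your ``analogous argument'' for $\bM_2$ hides nontrivial input that is \emph{not} parallel to the $\bM_1$ case. One needs Lemma~\ref{lem:isotropy subgroup determine} (equal orthogonal groups force the quadratic forms to agree up to a scalar $r_u\in\bQ^\times$) and Lemma~\ref{lem:determinant and primitivity} (the companion matrix $M_{\phi_u}$ is a primitive integer matrix with $\det M_{\phi_u}=\|u\|^2$). Primitivity again pins $r_u$ to $\bZ[\tfrac1p]^\times$, Theorem~\ref{thm:gorod-oh}(3) bounds its $p$-adic valuation, and taking determinants in $\eta_v^{t}M_{\phi_v}\eta_v=r_u\,\eta_u^{t}M_{\phi_u}\eta_u$ gives $\|v\|^2=r_u^{\,d-1}\|u\|^2$, the desired contradiction. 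Without these two lemmas and the explicit use of part~(3), the $\bM_2$ step does not go through.
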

\begin{proof}[Proof of the Claim ]
Let $\pi_{1}:\bG_{1}\times\overline{\bG_{2}}\ra\bG_{1}$ and $\pi_{2}:\bG_{1}\times\overline{\bG_{2}}\ra\overline{\bG_{2}}$
denote the natural projection and define $\bM_{i}\defi\pi_{i}(\bM)$.
Since $\bM$ is semisimple and $\bG_{1}$ and $\overline{\bG_{2}}$
have non-isomorphic simple Lie factors\footnote{Because
of the ambient dimensions the accidental isomorphisms $\mathfrak s\mathfrak l_2\cong\mathfrak s\mathfrak o_3$ and $\mathfrak s\mathfrak l_4\cong\mathfrak s\mathfrak o_6$ play no role here.}, it is enough to show that
$\bM_{1}=\bG_{1}$ and $\bM_{2}=\overline{\bG}_{2}$.  

We begin with $\bM_{1}$: By \eqref{eq:subgroups of M}, $\bM_{1}$ contains subgroups
of the form $\delta_{v}^{-1}\bH_{v}\delta_{v}$ for any $v\in C_{2}$
with $\delta_{v}\in\bG_{1}(\bZ^S)$. By Lemma \ref{lem:Isotropy of Hv },
each $\delta_{v}^{-1}\bH_{v}\delta_{v}$ is a maximal semisimple subgroup.
Thus, if $\bM_{1}\neq\bG_{1}$ then for all $v,u\in C_{2}$ we have
$\delta_{v}^{-1}\bH_{v}\delta_{v}=\delta_{u}^{-1}\bH_{u}\delta_{u}$,
which says that $\bH_{\delta_{v}^{-1}v}=\bH_{\delta_{u}^{-1}u}$.
This in turn implies $\delta_{v}^{-1}v=\alpha\delta_{u}^{-1}u$ for
some $\alpha\neq0$. Fixing $v\in C_2$ we rewrite this as
\begin{equation}\label{eq:upToAlpha}\delta_{v}^{-1}v=\alpha_u\delta_{u}^{-1}u.\end{equation}  
We will show below that $\alpha_u$ is, as a real number, bounded and bounded away from $0$. Noting that for all $u\in C_{2}$ we have $\norm{\delta_{u}^{-1}u}^{2}=\norm{u}^{2}$, this will imply a contradiction as $\set{\norm{u}:u\in C_2}$ is unbounded. First note that as $v,u$ are primitive vectors in $\bZ^{d}$,
$\delta_{v}^{-1}v$ and $\delta_{u}^{-1}u$ are primitive vectors in $(\bZ^S)^{d}=\bZ[\frac{1}{p}]^{d}$
(considered as a $\bZ^S$-module). Hence $\alpha_u\in(\bZ^S)^{\times}=\set{\pm p^{n}:n\in\bZ}$. Now, from part~(3) of Theorem~\ref{thm:gorod-oh} (restricted
to the $p$-adic coordinate) there exist $g\in \bG_1(\bQ_p)$ and $h_u\in \bH_u(\bQ_p)^+$ such that $h_u^ih_u\delta_u\ra g$ for $u\in C_2$.  Thinking on \eqref{eq:upToAlpha} as equality in $\bQ_p^d$  and acting on it with $h_u^ih_u\delta_u$ we see that $\alpha_u u$ converges to $g\delta_{v}^{-1}v$ which is a fixed non-zero vector in $\bQ_p^d$. As $u$ is a primitive vector, its $p$-adic valuation is $1$ and hence the $p$-adic valuation of $\alpha_u$ is bounded, which shows that $\alpha_u$  stays bounded for all $u\in C_2$. 

Assume now that $\bM_{2}\neq\overline{\bG}_{2}$. It follows from
(\ref{eq:subgroups of M}) and from (\ref{eq:isotropy group equal to sec. factor})
that $\bM_{2}$ contains subgroups of the form $\bH_{\phi_{v}^{\eta_{v}}}$
for all $v\in C_{2}$ where $\eta_{v}\in\overline{\bG}_{2}(\bZ^S)$.
By Lemma \ref{lem:Isotropy of Hv },
$\bH_{\phi_{v}^{\eta_v}}$ is always maximal, so we have that $\bH_{\phi_{u}^{\eta_{u}}}=\bH_{\phi_{v}^{\eta_{v}}}$
for all $v,u\in C_{2}$. Fixing $v$ this implies that for each $u\in C_2$ there exist $r_u\in \bQ^\times$ such that $\phi_v\circ\eta_v=r_u(\phi_u\circ\eta_u)$ or equivalently that
\begin{equation}\label{matrices}
	\eta_{v}^{t}M_{\phi_{v}}\eta_{v} =r_u\eta_{u}^{t}M_{\phi_{u}}\eta_{u}.
\end{equation}
Applying Lemma \ref{lem:determinant and primitivity} and considering the above matrices
as primitive elements of $\operatorname{Mat}_{d-1,d-1}(\bZ[\frac1p])$,
it follows that $r_u\in\bZ[\frac1p]^\times$.
As above, we show that $r_u$ has bounded $p$-adic valuation using part~(3) of Theorem~\ref{thm:gorod-oh}. From this theorem we see that there exist  $g\in \overline\bG_2(\bQ_p)$ and $l_u\in \bH_{\phi_u}(\bQ_p)$ such that $l_u^il_u\eta_u\ra g$ for $u\in C_2$, where $l_u^i\defi\rho(g_u^{-1}h_u^ig_u)$. 
Applying the inverse of the converging sequence $l_u^il_u\eta_u$ to  \eqref{matrices}
and using again that $M_{\phi_u}\in\operatorname{Mat}_{d-1,d-1}(\bZ)$ is primitive (and
hence of $p$-adic norm one),
it follows again that the $p$-adic valuation of $r_u$ is bounded. Hence $r_u$ is, as a real number,
bounded and bounded away from $0$.
 Finally, recall that $\det M_{\phi_{u}}=\norm{u}^{2}$ and note that
$\det\eta_{v}=\det\eta_{u}=1$, so taking determinants in \eqref{matrices} we see that $\norm{v}^2=r_u^{d-1}\norm{u}^2$ which is a contradiction as $\set{\norm{u}:u\in C_2}$ is unbounded.
\end{proof}

\subsection{Step II: Upgrading from orthogonal lattices to orthogonal grids.\label{sub: ASL upgrading}}
For our fixed $i\in \set{1,r,p,rp}$ let $\nu$ be a weak$^{*}$ limit of $\pa{\mu_{v,S}^i}_{v\in C}$ which
is the limit of a subsequence $\pa{\mu_{v,S}^i}_{v\in C_{1}}$ for $C_{1}\subset C$.
We need to show that $\nu=\mu_{S}^i$. 
First notice that $\pi^{S}:\cY^{S}\ra\cX^{S}$ has compact fibers, which together with
Remark \ref{rem:projection of measures}
and $\S$\ref{sub:Step I for lattices} gives that $\pi_{*}^{S}\nu=\overline{\mu}_{S}^i$.
In particular, $\nu$ is also a probability measure.

We will use the
same type of argument as in $\S$\ref{sub:Step I for lattices}: assuming
that $\nu\neq\mu_{S}^i$ we will use the information furnished
by Theorem~\ref{thm:gorod-oh} to deduce a contradiction
to the fact that the primitive vectors in $C_{1}$ have their length
going to infinity. 

More precisely, we will apply 
Theorem~\ref{thm:gorod-oh} within the quotient 
$$
 \pa{\bG_{1}\times{\rm SL}_{d}}_{S}/\pa{\bG_{1}\times{\rm SL}_{d}}(\bZ^S).
$$
To simplify the notation we set~$\bG'={\bG_{1}\times{\rm SL}_{d}}$ and 
recall that
$$
 \bG=\bG_1\times\bG_2=\SO_{d}\times{\rm ASL}_{d-1}<\bG'.
$$ 
We note that the orbit~$\bG_S\bG'(\bZ^S)$ is isomorphic to (and will be identified with) $\cY_S=\bG_{S}/\bG(\bZ^S)$. 
Recall that this implies that the finite volume orbit~$\bG_{S} \bG'(\bZ^S)$
is a closed subset of~$\bG'_S/\bG'(\bZ^S)$,
equivalently a closed subset of~$\bG'_S$ or even of
the homogeneous space~$W=\bG_S\backslash\bG'_S$.

From Theorem~\ref{thm:gorod-oh}.(1)--(2) we find an algebraic $\bQ$-subgroup
$\bM<\bG'$ and $C_{2}\subset C_{1}$ such that
$\av{C_{1}\setminus C_{2}}<\infty$ and for all $v\in C_{2}$ 
\begin{equation}
\gamma_{v}^{-1}\bL_{v}\gamma_{v}<\bM\label{eq:subgroups of M-ASL case}
\end{equation}
for some $\gamma_{v}\in\bG'(\bZ^S)$. Moreover,
$\nu=\mu_{gM_{0}\bG'(\bZ^S)}$ for some finite-index
subgroup $M_{0}<\bM_{S}$ and $g\in \bG'_{S}$.
By construction all orbits in our sequence are contained in $\bG_{S}\bG'(\bZ^S)$,
which implies that the support of~$\nu$ is also contained in this set and in particular
that~$g\bG'(\bZ^S)\in \bG_{S}\bG'(\bZ^S)$.
This implies that we may change $\bM$ by a conjugate $\gamma\bM\gamma^{-1}$
for some $\gamma\in\bG'(\bZ^S)$ and assume
that $g\in\bG_{S}$.

If~$m\in M_0$, we obtain
that the element~$gm\bG'(\bZ^S)$ 
belongs to the support of~$\nu$. Therefore,~$\bG_S m\in W$ belongs to the closed (and discrete) set~$\bG_S\bG'(\bZ^S)$.
If~$m\in \bM_S$ is sufficiently close to the identity this implies~$m\in \bG_S$. 
As $\bM$ is Zariski connected
we conclude that $\bM<\bG$. Using the same argument it also follows from 
Theorem~\ref{thm:gorod-oh}.(3) that $\pa{\gamma_{v}}_{v\in C_{3}}\subset\bG$
for some subset~$C_3\subset C_2$ with~$|C_2\setminus C_3|<\infty$. 

By the previous step we  know that
$\pi_{*}^{S}\nu=\overline{\mu}_{S}^i$, which implies that either $\bM=\bG$
or $\bM=\bG_{1}\times\bM_{2}$ where $\bM_{2}$ is a $\bQ$-subgroup
which is $\bQ$-isomorphic to a fixed copy of ${\rm SL}{}_{d-1}$.
Such subgroups are of the form ${\rm SL}_{d-1}^{t}$ where ${\rm SL}_{d-1}^{t}$
is the conjugation of $\iota({\rm SL}{}_{d-1})=\left(\begin{array}{cc}
\begin{smallmatrix}{\rm SL}{}_{d-1}\end{smallmatrix} & 0\\
0 & 1
\end{array}\right)$ by $c_{t}=\left(\begin{array}{cc}
\begin{smallmatrix}I_{d-1}\end{smallmatrix} & t\\
0 & 1
\end{array}\right)$ for some fixed $t\in\bQ^{d-1}$. As above, we will be done once we
show that $\bM=\bG$. Assume therefore that we are in the second case and $\bM_{2}={\rm SL}_{d-1}^{t}$.
Using the definition of $\bL_{v}$ in~\eqref{def-lv} and projecting (\ref{eq:subgroups of M-ASL case})
using the canonical map $\bG\ra\bG_{2}$, we get that for all $v\in C_3$ we have
\begin{equation}\label{eq:contain}
c_{t}^{-1}\delta_v^{-1}g_{v}^{-1}\bH_{v}g_{v}\delta_vc_{t}\subset\iota({\rm SL}_{d-1}),
\end{equation}
where~$\delta_v$ denotes the projection of~$\gamma_v$ to~$\bG_2$.
Let $N\in\bN$ be such that $Nt\in\bZ^{d-1}_{\rm{prim}}$ and set $\tilde v\defi g_{v}\delta_vc_{t}(Ne_{d})$.
Note that $Ne_{d}$ is a simultaneous eigenvector of the right hand
side of \eqref{eq:contain}. It follows that for each $v\in C_{3}$, we have that $\tilde v\in(\bZ^S)^{d}$ 
and that $\tilde v$ is a simultaneous eigenvector for $\bH_{v}$. As above, since $v$ and $c_t(Ne_d)$ are primitive vectors, we get that 
$\tilde v=\alpha_{v}v$ for some $\alpha_{v}\in\bZ[\frac 1p]^\times$. As above, we show that $\alpha_v$ has bounded $p$-adic valuation using part~(3) of Theorem~\ref{thm:gorod-oh}. From this theorem we see that there exist  $g\in \bG_2(\bQ_p)$ and $l_v=g_v^{-1}h_vg_v,h_v\in \bH_{v}(\bQ_p)$ such that $l_v^il_v\delta_v\ra g$ for $v\in C_3$, where $l_v^i\defi g_v^{-1}h_v^ig_v$. Acting on $\tilde v$  with $h_v^ih_v$ we get 
$$\alpha_v v=\tilde v=h_v^ih_v\tilde v=h_v^ih_vg_{v}\delta_vc_{t}(Ne_{d})= g_v(l_v^il_v\delta_v)c_{t}(Ne_{d}).$$
Now recall that $(l_v^il_v\delta_v)c_{t}(Ne_{d})$ converges in $\bQ_p^d$
to some nonzero vector and that $g_v\in\SL_d(\bZ)\subseteq\SL_d(\bZ_p)$ does not change
the norm of $p$-adic vectors.
As $v$  has $p$-adic norm one, we see that $\alpha_v$ has bounded $p$-adic valuation, and therefore $\alpha_v$ is bounded and bounded away from $0$.

Recall the definition of $g_v$ and the vectors $v_i$ and $w$ from the introduction and note that $v_i\in v^\perp$ for $i=1,\ldots, d-1$ and $(w,v)=1$.
On the other hand, using that $\delta_v\in\bG_{2}(\bZ[\tfrac 1p])$, and the definition of $g_{v}$,
we see that 
\[
 \tilde{v}=Nw+\sum_{i=1}^{d-1}a_{i}v_{i}
 \]
  with $a_{i}\in\bZ[\tfrac 1p]$. 
 Taking the inner product of~$\tilde v$ with $v$ we get 
\[
\alpha_{v}\norm{v}^{2}=(\tilde v,v)=(Nw,v)=N
\]
for all $v\in C_3$. This gives a contradiction as $\set{\norm{v}:v\in C_3}$ is unbounded.

\section{An equivalence relation}\label{sec:equivalence relation} 

Let $G_{i}=\bG_{i}(\bR),\Gamma_{i}=\bG_{i}(\bZ)$ for $i=1,2$, $\Gamma=\Gamma_1\times\Gamma_2$, $K=\bH_{e_{d}}(\bR)$
and fix $v\in\bS^{d-1}(D)$ throughout this section. We identify $K\setminus G_{1}\cong\bS^{d-1}$
via the action of $G_{1}$ on $\bS^{d-1}$ defined by $w\mapsto k^{-1}w$ using the base point $e_d$. Indeed, this action is transitive with $K={\rm SO_{d-1}(\bR)}={\rm Stab}_{G_1}(e_{d})$. We will also write~$w.k=k^{-1}w$
for this right action of~$k\in G_1$ on~$w\in \bS^{d-1}(\bR)$.

As in the introduction, the group $K$ can also be embedded into $G_{2}$.
We denote the diagonal embedding of $K$ by $\Theta_{K}\defi\set{\pa{k,k}:k\in K}\subset G_{1}\times G_{2}$.

Let $\textbf{S}^{d-1}=\bS^{d-1}/\Gamma_{1}$ and $\textbf{S}^{d-1}(D)=\bS^{d-1}(D)/\Gamma_{1}$.
Set $\mathbf{v}=v.\Gamma_{1}$ and $[\Delta_{\mathbf{v}}]=[\Delta_{v}]$.
The latter is well-defined as $[\Delta_{\gamma v}]=[\Delta_{v}]$
for all $\gamma\in\Gamma_{1}$. Note also that the projection $\mathbf{v}\in \mathbf{S}^{d-1}(D) \mapsto\frac{\mathbf{v}}{\norm{\mathbf{v}}}\in \textbf{S}^{d-1}$
is well-defined. Recall the notation $\theta_{v}=a_{v}k_{v}g_{v}$ from Section \ref{sec:orbits and duality}. Using \eqref{eq:Delta}, it follows that the following double coset 
\begin{equation}
K\times K\pa{k_{v},\theta_{v}}\Gamma_{1}\times\Gamma_{2}\label{eq:orbit of (v,lambda v)}
\end{equation}
represents the pair 
\[
\pa{\mathbf{\frac{\mathbf{v}}{\norm{\mathbf{v}}}},[\Delta_{\mathbf{v}}]}\in\textbf{S}^{d-1}\times\cY_{d-1}.
\]
Note that all the measures appearing in equation (\ref{eq:ASL2 convergences})
are $\Gamma_{1}$-invariant. Therefore, if we consider the projection $\nu_{D}$
of $\tilde{\nu}_{D}$ to $\textbf{S}^{d-1}\times\cY_{d-1}$ we have
that the convergence (\ref{eq:ASL2 convergences}) is equivalent to
\begin{equation}
\nu_{D}\stackrel{\text{wea\ensuremath{k^{*}} }}{\lra}m_{\textbf{S}^{d-1}}\otimes m_{\cY_{d-1}}
\mbox{ as }D\ra\infty\mbox{ with }D\in A\label{eq:ASL2 conv modulu Gamm1}.
\end{equation}

\begin{comment}
Let $S(\mathbf{v})\defi\av{k_{v}^{-1}Kk_{v}\cap\Gamma_{1}}$ for some
$v\in\mathbf{v}$, which is the size of the stabilizer of $Kk_{v}$
for the action of $\Gamma_{1}$ on $K\setminus G_{1}\cong\bS^{d-1}$.
The weight given by $\nu_{D}$ to a point $\pa{\mathbf{\frac{\mathbf{v}}{\norm{\mathbf{v}}}},[\Delta_{\mathbf{v}}]}$
is equal to $\frac{a}{S(\mathbf{v})}$. Here, $a$ is a constant which
is uniquely determined by the fact that $\nu_{D}$ is a probability
measure, in fact $a=\frac{\av{\Gamma_{1}}}{\av{\bS^{d-1}(D)}}$.
For Lemma \ref{lem:duality punch line} below, it is useful to note
that we also have
\begin{equation}
a=\pa{\sum\frac{1}{i}\av{\set{\mathbf{u}\in\textbf{S}^{d-1}(D):S(\mathbf{u})=i}}}^{-1}.\label{eq:normlization factor}
\end{equation}
\end{comment}

\subsection{Definition of $P_{v}$ and the measure $\nu_{v}$\label{sub:Definition-of P_v}}

Set $S=\set{\infty,p}$ for some odd prime $p$
so $\bQ_{S}=\bR\times\bQ_{p}$ and $\bZ^S=\bZ[\tfrac{1}{p}]$.
For $w_1,w_2\in\bS^{d-1}(D)$ we say that $w_1\sim w_2$ if there
exist $g_{p}\in\bG_{1}(\bZ_{p})$ and $\gamma_{p}\in\bG_{1}(\bZ[\tfrac{1}{p}])$
such that $g_{p}w_1=w_2,\gamma_{p}w_1=w_2$. The equivalence relation~$\sim$ satisfies that if $w_1\sim w_2$ and $\gamma\in \Gamma_1$ then $\gamma w_1\sim\gamma w_2$, and so it 
descends to an equivalence
relation on $\textbf{S}^{d-1}(D)$. 

We set $P_{v}\defi\set{\mathbf{w}:\mathbf{w}\sim\mathbf{v}}$
and  $R_{v}=\set{\pa{\mathbf{\frac{\mathbf{\mathbf{w}}}{\norm{\mathbf{\mathbf{w}}}}},[\Delta_{\mathbf{w}}]}:\mathbf{w}\in P_{v}}$ for $v\in\bS^{d-1}(D)$.
We finally define $\nu_{v} =\nu_{D}|_{R_{v}}$. In the next section we will relate $\nu_{v}$ to the measure $\mu_{v,S}$.

\begin{comment}
As above, the
weight given by $\nu_{Q_{v}}$ to a point $\pa{\mathbf{\frac{\mathbf{u}}{\norm{\mathbf{u}}}},[\Delta_{\mathbf{u}}]}\in Q_{v}$
is equal to $\frac{b}{S(\mathbf{u})}$ where
\begin{equation}
b=\pa{\sum\frac{1}{i}\av{\set{\mathbf{w}\in P_{v}:S(\mathbf{w})=i}}}^{-1}.\label{eq:normlization factor-1}
\end{equation}
\end{comment}

\section{Deducing Theorem \ref{thm:main theorem} from Theorem \ref{thm:Hom spaces equi}.}\label{sec:translating}

\subsection{Restriction to the principal genus}
Consider the open orbit $$\cU\defi\bG(\bR\times\bZ_{p})\bG(\bZ[\tfrac{1}{p}])\subset\cY^{S}.$$
The complement of $\cU$ is a union of orbits of the open subgroup $\bG(\bR\times\bZ_{p})$ and so the set $\cU$ is also closed. 
Therefore we have that 
\begin{equation}
\eta_{v_{n}}\defi\mu_{v_{n},S}|_{\cU}\stackrel{\text{weak}^{*}}{\lra}\eta\defi\mu_S|_{\cU}\text{ as } n\to\infty\label{eq:restricted convergence}
\end{equation}
for $\set{v_{n}}$ as in Theorem \ref{thm:Hom spaces equi}.
We have a projection map $\pi$ from $\cU$, considered as a subset of
$\cY^{S}$, to $\cY^{\infty}=\bG(\bR)/\bG(\bZ)$ obtained by dividing
from the left by $\set{e}\times\bG(\bZ_{p})$. It follows that $\pi_{*}\pa{\eta}$
is a probability measure which is invariant under $\bG(\bR)$, that
is, it is the uniform probability measure on $\bG(\bR)/\bG(\bZ)$.
Therefore, under the assumptions of Theorem \ref{thm:Hom spaces equi}
we have 
\begin{equation}
\pi_{*}\pa{\eta_{v_n}}\stackrel{\text{weak}^{*}}{\lra}m_{\bG(\bR)/\bG(\bZ)}.\label{eq: equi real hom space}
\end{equation}
In addition, we have the projection map:
\begin{equation}
\rho:G_{1}\times G_{2}/\Gamma_{1}\times\Gamma_{2}\ra K\times K\setminus G_{1}\times G_{2}/\Gamma_{1}\times\Gamma_{2}\label{eq:the map p}
\end{equation}
Below we will show that the measures $\pa{\rho\circ\pi}_{*}\eta_{v}$
and $\nu_{v}$ are closely related.

\subsection{Description of $\eta_{v}$ as union of orbits}
Fix $v\in\bS^{d-1}(D)$. For $h\in\bH_{v,\set{p}}$ we set $s(h)=0$ if $h\in \bG_{1}(\bZ_{p})\bG_{1}(\bZ[\tfrac{1}{p}])$ and $s(h)=\textrm{other}$ otherwise.
We will not need this, but wish to note that
the symbol~$0$ corresponds here to our quadratic form~$Q_0(x_1,\ldots,x_d)=x_1^2+\ldots+x_d^2$ 
and~`$\textrm{other}$'
for the other quadratic forms in the genus of~$Q_0$.
 For $s\in\{0,\textrm{other}\}$
choose a set $M_{s}$ satisfying the following equality
(as subsets of $\bH_{v,\set{p}}$): 
\[
\bigcup_{h\in\bH_{v,\set{p}},s(h)=s}\bH_{v}(\bZ_{p})h\bH_{v}(\bZ[\tfrac{1}{p}])=\bigsqcup_{h\in M_{s}}\bH_{v}(\bZ_{p})h\bH_{v}(\bZ[\tfrac{1}{p}])\label{eq:h j's}
\]
where the later denotes a disjoint union. We note in passing that the sets~$M_{0}$ and  $M_{\textrm{full}}\defi M_0\cup M_{\textrm{other}}$ are finite as these double quotients are in correspondence with orbits of the open subgroup $\bH_v(\bR\times \bZ_p)$ on the \emph{compact} quotient $\bH_v(\bQ_S)/\bH_v(\bZ[\tfrac{1}{p}])$.
The finiteness of $M_0$ also follows from Proposition \ref{prop:Pv and schmidt} below.
Recall $\theta_{v}=a_{v}k_{v}g_{v}$ and note that $\pa{k_{v},\theta_{v}}\bL_{v}(\bR)=\Theta_{K}\pa{k_{v},\theta_{v}}$.
Using this we can express the orbit $\mathbf{O}_{v,S}$ from~\eqref{defOvS} in a different form: set
$l(h)=g_{v}^{-1}hg_{v}$ and let us reorder entries of products whenever convenient so that
\begin{equation}
\Theta_{K}\times\bL_{v,\set{p}}=
 \Bigl\{\pa{k,h,k,l(h)}:k\in K,h\in\bH_{v,\set{p}}\Bigr\}\subset \bG_S.\label{eq:mixing coordinates}
\end{equation}
In this notation we get 
\[
\mathbf{O}_{v,S}=\Theta_{K}\times\bL_{v,\set{p}}(k_{v},e_{p},\theta_{v},e_{p})\bG(\bZ[\tfrac{1}{p}]).
\]
We also set $\bL(\bZ_{p})=\bL_{v,\set{p}}\cap\bG(\bZ_{p})$ and obtain
\[
\mathbf{O}_{v,S}=\bigsqcup_{h\in M_{\textrm{full}}}\Theta_{K}\times\bL(\bZ_{p})(k_{v},h,\theta_{v},l(h))\bG(\bZ[\tfrac{1}{p}]),
\]
where we used the same identification as in (\ref{eq:mixing coordinates}). Furthermore the restricted measure
$\eta_{v}$ (see (\ref{eq:restricted convergence})) is a $\Theta_{K}\times\bL(\bZ_{p})$-invariant
probability measure on
\begin{equation}
\mathbf{O}_{v,S}\cap\mathcal{U}=\bigsqcup_{h\in M_{0}}\Theta_{K}\times\bL(\bZ_{p})(k_{v},h,\theta_{v},l(h))\bG(\bZ[\tfrac{1}{p}]).\label{eq:M+ union}
\end{equation}
We note that the last equality could also have been used as the definition of
the finite set~$M_0\subset\bH_{v,\set{p}}$ of representatives.

\subsection{The support of the measure $\pi_{*}(\eta_{v})$.}

By definition each $h\in M_{0}$ belongs to $\bG_{1}(\bZ_{p})\bG_{1}(\bZ[\frac{1}{p}])$.
So we can write $h=c_{1}(h)\gamma_{1}(h)^{-1}$ where $c_{1}(h)\in\bG_{1}(\bZ_{p})$
and $\gamma_{1}(h)\in\bG_{1}(\bZ[\frac{1}{p}])$. Using the fact that
$\bG_{2}$ has class number 1, we can write $l(h)=c_{2}(h)\gamma_{2}(h)^{-1}$
where $c_{2}(h)\in\bG_{2}(\bZ_{p})$ and $\gamma_{2}(h)\in\bG_{2}(\bZ[\frac{1}{p}])$. 

\begin{prop}
\label{prop:first desc of the the measure}The measure $\pi_{*}(\eta_{v})$
is a $\Theta_{K}$-invariant probability measure on \textup{
\begin{equation}
\bigsqcup_{h\in M_{0}}\cO_{h}\defi \bigsqcup_{h\in M_{0}}\Theta_{K}(k_{v}\gamma_{1}(h),a_{v}k_{v}g_{v}\gamma_{2}(h))\Gamma.\label{eq:def of Oh}
\end{equation}
}\end{prop}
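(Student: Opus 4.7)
I would unwind the identification $\pi:\cU/(\{e\}\times\bG(\bZ_p))\cong \bG(\bR)/\bG(\bZ)$ on the decomposition (\ref{eq:M+ union}) one piece at a time. The main device will be the double-coset decomposition $h=c_1(h)\gamma_1(h)^{-1}$ and $l(h)=c_2(h)\gamma_2(h)^{-1}$ together with the diagonal embedding $\bG(\bZ[\tfrac1p])\hookrightarrow\bG(\bQ_S)$: right-multiplying a representative by the lattice element $(\gamma_1(h),\gamma_2(h))$ moves the $p$-adic coordinate from $\bH_v(\bZ_p)h$ into $\bG(\bZ_p)$, which lets me read $\pi$ off directly from the real coordinate.

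Concretely, for each $h\in M_0$ I would take a typical element
\[
(kk_v,\,h'h,\,k\theta_v,\,l(h'h))\,\bG(\bZ[\tfrac1p])
\]
of the $h$-piece (with $k\in K$, $h'\in\bH_v(\bZ_p)$), substitute the decompositions, and multiply on the right by $(\gamma_1(h),\gamma_2(h))$. Using that $l$ is a group homomorphism and maps $\bH_v(\bZ_p)$ into $\bG_2(\bZ_p)$ (since $g_v\in\SL_d(\bZ)$ and $l(\bH_v)\subset\bG_2$ by the $\bL_v$-construction), the $p$-adic coordinate becomes $(h'c_1(h),\,l(h')c_2(h))\in\bG(\bZ_p)$ and the real coordinate becomes $(kk_v\gamma_1(h),\,k\theta_v\gamma_2(h))$. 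Applying $\pi$ and letting $k$ vary over $K$ then yields precisely $\Theta_K(k_v\gamma_1(h),a_vk_vg_v\gamma_2(h))\Gamma=\cO_h$, which gives the support formula.

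For the remaining assertions I would observe that $\mu_{v,S}$ is invariant under $\bL_{v,S}$, in particular under $\Theta_K$, and that $\cU$ is a union of $\bG(\bR\times\bZ_p)$-orbits, so $\eta_v$ inherits $\Theta_K$-invariance; since $\pi$ is $\bG(\bR)$-equivariant (the left $\bG(\bR)$-action commutes with quotienting by $\{e\}\times\bG(\bZ_p)$), this transfers to $\pi_*\eta_v$. Continuity of $\pi$ together with $\eta_v$ being a probability measure gives that $\pi_*\eta_v$ is a probability measure.

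The step I expect to be the most delicate is the disjointness of the union $\bigsqcup_{h\in M_0}\cO_h$: pieces that are disjoint in $\cY^S$ could a priori collapse once the $p$-adic information is discarded. To rule this out I would argue that any equality $\cO_{h_1}=\cO_{h_2}$ in $\bG(\bR)/\bG(\bZ)$ produces a lattice element $\gamma\in\bG(\bZ)\subset\bG(\bZ[\tfrac1p])$ matching the real representatives, which via $\bG(\bZ[\tfrac1p])\cap(\bG(\bR)\times\bG(\bZ_p))=\bG(\bZ)$ lifts back to an equality of the underlying $\Theta_K\times\bL(\bZ_p)$-orbits in $\cY^S$; this in turn would force $\bH_v(\bZ_p)h_1\bH_v(\bZ[\tfrac1p])=\bH_v(\bZ_p)h_2\bH_v(\bZ[\tfrac1p])$, contradicting the defining property of $M_0$.
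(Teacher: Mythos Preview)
Your argument for the support formula and the $\Theta_K$-invariance is exactly the paper's: plug the decompositions $h=c_1(h)\gamma_1(h)^{-1}$ and $l(h)=c_2(h)\gamma_2(h)^{-1}$ into (\ref{eq:M+ union}), absorb $(\gamma_1(h),\gamma_2(h))$ into the lattice via the diagonal embedding $\bG(\bZ[\tfrac1p])\hookrightarrow\bG(\bQ_S)$, and then quotient on the left by $\{e\}\times\bG(\bZ_p)$.

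The only difference is organizational, in the disjointness of the $\cO_h$. The paper does not argue it here but forward-references Proposition~\ref{prop:Pv and schmidt}: the injectivity of $h\mapsto Kk_v\gamma_1(h)\Gamma_1$ on $M_0$ separates the $\cO_h$ already by their $\bG_1$-projections. Your direct route---lifting an equality $\cO_{h_1}=\cO_{h_2}$ back to an equality of $\Theta_K\times\bL(\bZ_p)$-orbits in $\cY^S$---is correct and, once unpacked, is essentially that same injectivity argument. One caution: the identity $\bG(\bZ[\tfrac1p])\cap\bG(\bR\times\bZ_p)=\bG(\bZ)$ alone is not quite enough, since after matching the real coordinates you must check that the $p$-adic discrepancy lies in $\bL(\bZ_p)$, not merely in $\bG(\bZ_p)$. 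This amounts to showing that the element $\gamma_1(h_1)\delta^{-1}\gamma_1(h_2)^{-1}$ produced by the real matching lies in $\bH_v(\bZ[\tfrac1p])$ (it does, since it lies in $k_v^{-1}Kk_v=\bH_v(\bR)$ and in $\bG_1(\bZ[\tfrac1p])$), after which $h_1\in\bH_v(\bZ_p)h_2\bH_v(\bZ[\tfrac1p])$ follows. Working directly in the $\bG_1$-factor, as the paper does in Proposition~\ref{prop:Pv and schmidt}, avoids having to track the second factor through this lift.
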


\begin{proof}
The $\Theta_K$-invariance is clear from the $\Theta_K$ invariance of $\eta_v$. The proposition now follows by plugging $h=c_{1}(h)\gamma_{1}(h)^{-1}$
and $l(h)=c_{2}(h)\gamma_{2}(h)^{-1}$ into (\ref{eq:M+ union}) while
recalling two facts. The first is that the map $\pi$ is dividing
by $\set{e}\times\bG(\bZ_{p})$ from the left. The second is that
$(\gamma_{1}(h),\gamma_{1}(h),\gamma_{2}(h),\gamma_{2}(h))\in\bG(\bZ[\frac{1}{p}])$. The fact that this is a disjoint union follows from Propoisition \ref{prop:Pv and schmidt} below.
\end{proof}

Let us note that~$\Theta_{K}(k_{v}\gamma_{1}(h),a_{v}k_{v}g_{v}\gamma_{2}(h))\Gamma$ does not depend on the
choice of the representative of the double coset~$\bH_{v}(\bZ_p)h\bH_{v}(\bZ[\tfrac{1}{p}])$ and 
also not on the choice of the above decompositions. For simplicity we explain this only in the $\bG_1$ factor and the proof for both factors together is just notationally more difficult. Let us first assume 
that~$h=c_1\gamma_1^{-1}=c_1'(\gamma_1')^{-1}$ %(and~$l(h)=c_2\gamma_2^{-1}=c_2'(\gamma_2')^{-1}$) 
are two decompositions as above. This gives that
$c_{1}^{-1}c_{1}'=\gamma_{1}^{-1}\gamma_{1}'$ belongs to
$\bG_{1}(\bZ_{p})\cap\bG_{1}(\bZ[\frac{1}{p}])=\bG_{1}(\bZ)=\Gamma_1$, which implies the second half of the 
claimed independence in the~$\bG_1$-factor. 
If now~$h=c_1\gamma_1^{-1}$ as above,~$h_p\in\bH_{v}(\bZ_p)$, and~$\beta\in\bH_{v}(\bZ[\tfrac{1}{p}])$,
then~$h_ph\beta=(h_pc_1)(\gamma_1^{-1}\beta)$ and we associate to this point
the double coset $Kk_{v}\beta^{-1}\gamma_{1}\Gamma_1$. Using~$k_v\beta^{-1}k_v^{-1}\in K$
the latter equals $Kk_v\gamma_1\Gamma_1$, which is 
the claimed independence for the components in~$\bG_1$. 

We will now relate the set appearing in the above
proposition with the set~$P_v$ introduced in \S\ref{sub:Definition-of P_v}. 
\begin{prop}
\label{prop:Pv and schmidt}For $h\in M_0$ set $\varphi(h)=Kk_{v}\gamma_{1}(h)\Gamma_{1}$. Then $\varphi$ is a bijection from $M_0$ to  $\set{Kk_{u}\Gamma_{1}:\mathbf{u}\in P_{v}}$. Noting that $\varphi(h)$ corresponds to  $u=\gamma_{1}(h)^{-1}v$ we further claim that  $Ka_{v}k_{v}g_{v}\gamma_{2}(h)\Gamma_2 = [\Delta_{\mathbf{u}}]$.\end{prop}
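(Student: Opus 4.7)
The plan is to set $u := \gamma_1(h)^{-1} v$ for each $h \in M_0$ with decomposition $h = c_1(h) \gamma_1(h)^{-1}$, and verify in turn: (i)~$u \in \bS^{d-1}(D)$ and $\mathbf u \in P_v$; (ii)~$\varphi(h) = K k_u \Gamma_1$; (iii)~surjectivity; (iv)~injectivity; and (v)~the grid identity.

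For (i) and (ii): since $hv = v$, the decomposition gives $c_1(h) u = v$, so $u = c_1(h)^{-1} v \in \bZ_p^d$; combined with $u \in \bZ[\tfrac 1p]^d$ this forces $u \in \bZ^d$, and primitivity of $v$ in $\bZ_\ell^d$ transfers to $u$ for every prime $\ell$ (using $c_1(h)^{-1} \in \SL_d(\bZ_p)$ for $\ell=p$ and $\gamma_1(h)^{-1} \in \SL_d(\bZ_\ell)$ for $\ell \ne p$). Orthogonality gives $\|u\|^2 = D$, and $c_1(h) u = \gamma_1(h) u = v$ yields $\mathbf u \sim \mathbf v$. For (ii), both $k_u$ and $k_v \gamma_1(h)$ send $u$ to $\sqrt D e_d$, so they differ by left multiplication by an element of $K$, whence $\varphi(h) = K k_u \Gamma_1$.

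Surjectivity (iii) is direct: given $\mathbf u \in P_v$ with $g_p u = v = \gamma_p u$, the product $h := g_p \gamma_p^{-1}$ lies in $\bH_v(\bQ_p)$, has $s(h) = 0$, and its representative $h' \in M_0$ satisfies $\varphi(h') = K k_u \Gamma_1$ by the well-definedness of $\varphi$ noted before the proposition. The key step is (iv): if $\varphi(h_1) = \varphi(h_2)$ then $u_1 = \gamma^{-1} u_2$ for some $\gamma \in \Gamma_1$, and the element $\beta := \gamma_1(h_1)\gamma^{-1}\gamma_1(h_2)^{-1}$ lies in $\bG_1(\bZ[\tfrac 1p])$ and fixes $v$, so $\beta \in \bH_v(\bZ[\tfrac 1p])$. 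A short manipulation of the decompositions then yields $h_1 = \alpha h_2 \beta^{-1}$ with $\alpha := c_1(h_1)\gamma^{-1}c_1(h_2)^{-1} \in \bG_1(\bZ_p)$; since $h_1, h_2, \beta^{-1} \in \bH_v$ we automatically get $\alpha \in \bH_v(\bZ_p)$, placing $h_1$ and $h_2$ in the same double coset, hence equal in $M_0$.

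For (v), the natural candidate for the matrix defining $[\Delta_{\mathbf u}]$ is $g_u := \gamma_1(h)^{-1} g_v \gamma_2(h)$. Using the relation $l(h) = g_v^{-1} h g_v = c_2(h) \gamma_2(h)^{-1}$ one rewrites $g_u = c_1(h)^{-1} g_v c_2(h) \in \SL_d(\bZ[\tfrac 1p]) \cap \SL_d(\bZ_p) = \SL_d(\bZ)$. A routine check using orthogonality of $\gamma_1(h)$ and $v_i \in v^\perp$ shows that the first $d-1$ columns of $g_u$ span $\Lambda_u$, while the last column $w_u$ satisfies $(w_u, u) = (w, v) = 1$, so $g_u$ is admissible. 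Combining $a_u = a_v$ (as $\|u\|=\|v\|$), $K k_u = K k_v \gamma_1(h)$ from (ii), and $K a_v = a_v K$ gives
\[
 K a_u k_u g_u \Gamma_2 = K a_v k_v \gamma_1(h) g_u \Gamma_2 = K a_v k_v g_v \gamma_2(h) \Gamma_2.
\]
The main obstacle is step (iv): lifting the weak equivalence $u_1 \equiv u_2 \pmod{\Gamma_1}$ on the sphere to an equality of $\bH_v$-double cosets in $\bH_v(\bQ_p)$, which requires the explicit construction of $\alpha$ and $\beta$ above.
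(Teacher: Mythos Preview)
Your proof is correct and follows essentially the same route as the paper's. The only cosmetic difference is in step~(v): the paper fixes an arbitrary admissible $g_u$ and shows that $g_u^{-1}\gamma_1^{-1}g_v\gamma_2\in\Gamma_2$, whereas you set $g_u:=\gamma_1(h)^{-1}g_v\gamma_2(h)$ and verify directly that this matrix satisfies the defining properties of an admissible $g_u$; the two arguments unwind to the same computation $g_u=c_1(h)^{-1}g_vc_2(h)\in\SL_d(\bZ_p)\cap\SL_d(\bZ[\tfrac1p])=\SL_d(\bZ)$.
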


\begin{proof}
Fix $h\in M_{0}$ and recall that $h$ stabilizes $v$. We first need to show that  $u=\gamma_{1}(h)^{-1}v\in\bZ^{d-1}$: indeed, we have 
\[
\bZ[\tfrac{1}{p}]^{d}\ni\gamma_{1}(h)^{-1}v=c_{1}(h)^{-1}hv=c_{1}(h)^{-1}v\in\bZ_{p}^{d}
\]
so $u\in\bZ^{d}$ as $\bZ[\frac{1}{p}]\cap\bZ_{p}=\bZ$. Now, the
elements $c_{1}(h),\gamma_{1}(h)$ satisfy 
\[
c_{1}(h)u=c_{1}(h)\gamma_{1}(h)^{-1}v=hv=v\mbox{ and }\gamma_{1}(h)u=v.
\]
So  $u\sim v$ and therefore
$Kk_{v}\gamma_{1}(h)\Gamma_{1}$ belongs to $\set{Kk_{u}\Gamma_{1}:\mathbf{u}\in P_{v}}$. 

To see that $\varphi$ is onto, fix $u\sim v$ and let $h_{u}=g_{p}\gamma_{p}^{-1}\in\bH_{v,\set{p}}$
arising from the definition of $\sim$ in $\S$\ref{sub:Definition-of P_v}.
Then~$\gamma_p u=v$ and $s(h_{u})=0$. Let $\bar{h}\in M_{0}$ be such that $\bH_{v}(\bZ_{p})h_{u}\bH_{v}(\bZ[\frac{1}{p}])=\bH_{v}(\bZ_{p})\bar{h}\bH_{v}(\bZ[\frac{1}{p}])$.
We have explained above that $Kk_{u}\Gamma_{1}=Kk_{v}\gamma_{p}\Gamma_{1}=Kk_{v}\gamma_{1}(\bar{h})\Gamma_{1}$.

For injectivity, let  $h_1,h_2\in M_0$ and set $\alpha_i=\gamma_1(h_i),k_i=c_1(h_i),\, i=1,2$. Assuming $\varphi(h_1)= \varphi(h_2)$, there exists a $\gamma\in \Gamma_1$ such that $Kk_v\alpha_1\gamma=Kk_v\alpha_2$. Thus $\alpha_1\gamma \alpha_2^{-1}$ stabilizes $v$ so  $\alpha_1\gamma \alpha_2^{-1}\in\bH_v(\bR)\cap\bG_1(\bZ[\tfrac{1}{p}])=\bH_v(\bZ[\tfrac{1}{p}])$. Also $(k_2\gamma^{-1} k_1^{-1})v=(h_2\alpha_2\gamma^{-1} \alpha_1^{-1}h_1^{-1})v=v$ so $k_2\gamma^{-1} k_1^{-1}\in  \bH_v(\bQ_p)\cap \bG_1(\bZ_p)= \bH_v(\bZ_p)$. As $(k_2\gamma^{-1} k_1^{-1})h_1(\alpha_1\gamma \alpha_2^{-1})=h_2$ we see that  
 $$\bH_{v}(\bZ_{p})h_1\bH_{v}(\bZ[\tfrac{1}{p}])= \bH_{v}(\bZ_{p})h_2\bH_{v}(\bZ[\tfrac{1}{p}]).$$ 

For the second assertion, fix $h\in M_{0}$ and let $u=\gamma_{1}(h)^{-1}v$. 
We will use the abbreviations $\gamma_{i}=\gamma_{i}(h),c_{i}=c_{i}(h)$ for $i=1,2$
which satisfy by definition that~$h=c_1\gamma_1^{-1}$ and~$l(h)=g_v^{-1}hg_v=c_2\gamma_2^{-1}$.
We need to show that 
\begin{equation}
Ka_{v}k_{v}g_{v}\gamma_{2}\Gamma_{2}\stackrel{?}{=}[\Delta_{u}]=Ka_{u}k_{u}g_{u}\Gamma_{2}.\label{eq:Moved coset-1}
\end{equation}
Note first that $a_{v}=a_{u}$ and that $k_{v}\gamma_{1}$ is a legitimate
choice of $k_{u}$.  With these choices (and using
the identity of~$K$ on both sides), (\ref{eq:Moved coset-1})
will follow once we show $g_{u}^{-1}\gamma_{1}^{-1}g_{v}\gamma_{2}\in\Gamma_{2}$.
The element $g_{u}^{-1}\gamma_{1}^{-1}g_{v}\gamma_{2}$ is certainly
a determinant 1 element which maps $\bR^{d-1}$ to itself. Furthermore,
the last entry of its last column is positive by the orientation
requirement in the definition of $g_{v}$ and $g_{u}$. Therefore,
it will be enough to show that this element maps $\bZ^{d}$ to itself.
Using $\bZ[\frac{1}{p}]\cap\bZ_{p}=\bZ$ again, this follows from 
\[
\begin{aligned}\bZ[\tfrac{1}{p}]^{d}\supset g_{u}^{-1}\gamma_{1}^{-1}g_{v}\gamma_{2}\bZ^{d}=g_{u}^{-1}c_{1}^{-1}\pa{c_{1}\gamma_{1}^{-1}}g_{v} & \pa{\gamma_{2}c_{2}^{-1}}c_{2}\bZ^{d}=\\
=g_{u}^{-1}c_{1}^{-1}hg_{v}g_{v}^{-1}{h}^{-1}g_{v}c_{2}\bZ^{d} & =g_{u}^{-1}c_{1}^{-1}g_{v}c_{2}\bZ^{d}\subset\bZ_{p}^{d}.
\end{aligned}
\]
\end{proof}
\subsection{Weights of $\pa{\rho\circ\pi}_{*}\eta_{v}$  and $\nu_{v}$}
Fix a sequence $\pa{v_n}$ of vectors satisfying the conditions of Theorem~\ref{thm:Hom spaces equi} and set 
$\mu_n\defi\pa{\rho\circ\pi}_{*}\eta_{v_n}$ (with $\pi$ as in \eqref{eq: equi real hom space} and $\rho$ as in \eqref{eq:the map p}) and $\nu_n\defi \nu_{v_n}$.
It follows from Propositions \ref{prop:first desc of the the measure}--\ref{prop:Pv and schmidt} that $R_{v_n}=\rm Supp(\nu_n)=\rm Supp(\mu_n)$. Let $\lambda_n$ denote the normalised counting measure on $R_{v_n}$. In this section we show 
\begin{equation}
 \mu_n-\lambda_n \slra{n\ra\infty} 0 \text{ and } \nu_n-\lambda_n\slra{n\ra\infty} 0, \label{eq:close to counting}
\end{equation}
That is, the measures $\mu_n$  and $\nu_n$ are equal to $\lambda_n$  up-to a negligible error. For $\bu\in\mb{S}^{d-1}$ let $S(\mathbf{u})=\av{\on{Stab}_{\Gamma_1}(u)}$ for some $u\in \mathbf{u}$ and $E=\tilde{E}\times \cY_{d-1}$ where
$$\tilde{E}\defi \set{\bu\in\mb{S}^{d-1}: S(\mathbf{u})>1}\subset \mb{S}^{d-1}.$$
The convergences in (\ref{eq:close to counting}) follow from the following two lemmata:
\begin{lem}\label{lem:weights}
Fix $n\in \bN$ and let $v=v_n$. Set $M_n=\max_{x\in R_{v}}\mu_n(x)$ and $N_n=\max_{x\in R_{v}}\nu_n(x)$ and $a=\av{\Gamma_1}$. For every $x\in R_{v}$,   $\frac{M_n}{a}\leq \mu_n(x)\leq M_n$ and $\frac{N_n}{a} \leq\nu_n(x)\leq N_n$. Furthermore, equality holds on the right hand side of both inequalities when $x\in R_v\setminus E$.
\end{lem}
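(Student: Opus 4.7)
My plan is to show that both $\mu_n$ and $\nu_n$ assign to each point $x = (\bu/\norm{\bu}, [\Delta_{\bu}]) \in R_v$ a weight proportional to $1/S(\bu)$ (indeed the same weight), from which the lemma follows by elementary manipulations. Since $S(\bu)$ divides $a = |\Gamma_1|$ by Lagrange's theorem and $S_{\min} := \min_{\bw \in P_v}S(\bw) \geq 1$, the maximum is attained at $S(\bu) = S_{\min}$ and the ratio of any weight to the maximum equals $S_{\min}/S(\bu) \in [1/a, 1]$. When $x \in R_v \setminus E$, one has $S(\bu) = 1$, which forces $S_{\min} = 1$ and yields equality on the right.

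The formula $\nu_n(x) = (1/S(\bu))/\sum_{\bw \in P_v}(1/S(\bw))$ follows directly from the construction. The measure $\tilde\nu_D$ is the normalized counting measure on the finite set $\{(v/\sqrt{D}, [\Delta_v]) : v \in \bS^{d-1}(D)\}$, and projection modulo $\Gamma_1$ collects each $\Gamma_1$-orbit of size $a/S(\bu)$ onto a single point of $\textbf{S}^{d-1}(D) \times \cY_{d-1}$; this yields a $\nu_D$-weight proportional to $1/S(\bu)$, and restriction to $R_v$ together with renormalization gives the claimed formula.

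For $\mu_n = (\rho \circ \pi)_*\eta_{v_n}$, I would use Propositions~\ref{prop:first desc of the the measure} and~\ref{prop:Pv and schmidt} to decompose $\pi_*\eta_v$ as a sum of measures on the disjoint $\Theta_K$-orbits $\cO_h$ ($h \in M_0$), each of which maps under $\rho$ to a distinct point $x_h \in R_v$; therefore $\mu_n(x_h) = \pi_*\eta_v(\cO_h)$. Since $\eta_v$ is the normalized restriction of Haar measure on the compact $\bL_{v,S}$-orbit $\mathbf{O}_{v,S}$ to the open union of $\Theta_K \times \bL(\bZ_p)$-orbits making up $\mathbf{O}_{v,S} \cap \cU$, a standard Haar-comparison argument shows this weight is inversely proportional---with constant of proportionality independent of $h$---to the cardinality of the finite stabilizer
\[
 \Lambda_h \;:=\; \bigl(\Theta_K \times \bL(\bZ_p)\bigr) \cap (k_v, h, \theta_v, l(h))\,\bG(\bZ^S)\,(k_v, h, \theta_v, l(h))^{-1}.
\]

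The main obstacle will be the arithmetic identification $|\Lambda_h| = S(\bu)$, where $u = \gamma_1(h)^{-1}v$. Using the relation $(k_v, \theta_v)\bL_v(\bR)(k_v, \theta_v)^{-1} = \Theta_K$ (which follows from $a_v$ commuting with $\SO_{d-1}(\bR)$), the real component of the stabilizer condition becomes automatic and only the $p$-adic condition $h\gamma h^{-1} \in \bH_v(\bZ_p)$, for $\gamma \in \bH_v(\bZ[\tfrac{1}{p}])$, remains. Writing $h = c_1(h)\gamma_1(h)^{-1}$ with $c_1(h) \in \bG_1(\bZ_p)$ and using $\bZ[\tfrac{1}{p}] \cap \bZ_p = \bZ$, this condition reduces to $\gamma_1(h)^{-1}\gamma\gamma_1(h) \in \Gamma_1$; combining with $\gamma \in \bH_v$ gives $\gamma_1(h)^{-1}\gamma\gamma_1(h) \in \bH_u \cap \Gamma_1 = \on{Stab}_{\Gamma_1}(u)$. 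This establishes the bijection $\sigma \mapsto \gamma_1(h)\sigma\gamma_1(h)^{-1}$ between $\on{Stab}_{\Gamma_1}(u)$ and $\Lambda_h$, whence $|\Lambda_h| = S(\bu)$ and $\mu_n = \nu_n$ on $R_v$; the lemma then follows at once.
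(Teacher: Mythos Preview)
Your argument is correct and in fact establishes more than the paper does: you prove the exact identity $|\Lambda_h|=S(\bu)$, hence $\mu_n=\nu_n$ pointwise on $R_v$, whereas the paper is content with the inequality $|\Lambda_h|\leq S(x(u))$. The paper obtains this upper bound more cheaply by projecting the stabilizer to the $\bG_1$-factor (using that the diagonal embedding makes the $\bG_2$-coordinates redundant) and then enlarging $\bH_v(\bZ_p)$ to $\bG_1(\bZ_p)$; this already yields the bound $\av{\bH_v(\bR)\cap\gamma_1(h)\Gamma_1\gamma_1(h)^{-1}}=S(x(u))$. Since $1\leq|\Lambda_h|\leq S(x(u))\leq a$, the weights $C/|\Lambda_h|$ lie in $[C/a,C]$, and when $S(x(u))=1$ one is forced to $|\Lambda_h|=1$; this is all the lemma requires.

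One point in your write-up deserves a sentence of justification. When you say that after using $(k_v,\theta_v)\bL_v(\bR)(k_v,\theta_v)^{-1}=\Theta_K$ ``only the $p$-adic condition remains'', you are implicitly asserting that the $\bG_2$-part of the lattice condition, namely $g_v^{-1}\gamma g_v\in\bG_2(\bZ[\tfrac1p])$, is automatic for $\gamma\in\bH_v(\bZ[\tfrac1p])$. This is true, but needs the observation that $g_v\in\SL_d(\bZ)$ gives entries in $\bZ[\tfrac1p]$, and that $\gamma\in\bH_v$ forces $g_v^{-1}\gamma g_v$ to preserve $\bR^{d-1}=g_v^{-1}(v^\perp)$ with $(d,d)$-entry equal to $(\gamma w,v)=(w,v)=1$, so that indeed $g_v^{-1}\gamma g_v\in\ASL_{d-1}(\bZ[\tfrac1p])$. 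With this check your bijection $\on{Stab}_{\Gamma_1}(u)\to\Lambda_h$ is complete, and the stronger conclusion $\mu_n=\nu_n$ follows.
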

\begin{lem}\label{lem:negligible E}
We have that 
\begin{equation}
 \frac{|R_{v_n}\cap E|}{|R_{v_n}|}\rightarrow 0\mbox{ as }n\ra \infty.
 \label{eq:QcapE is negligible}
\end{equation}
\end{lem}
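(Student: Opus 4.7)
The plan is to deduce~\eqref{eq:QcapE is negligible} by leveraging the already-established weak-$*$ convergence~\eqref{eq: equi real hom space} together with the pointwise weight estimates of Lemma~\ref{lem:weights}, thereby avoiding any direct counting of lattice points on proper subspheres. Applying the continuous projection $\rho$ of~\eqref{eq:the map p} to~\eqref{eq: equi real hom space} yields that
\begin{equation*}
 \mu_n = (\rho\circ\pi)_{*}\eta_{v_n} \stackrel{\text{weak}^{*}}{\longrightarrow} m_{\mathbf{S}^{d-1}}\otimes m_{\cY_{d-1}},
\end{equation*}
where the right-hand side is the Haar probability measure obtained by pushing forward the uniform probability measure $\pi_*(\eta) = m_{\bG(\bR)/\bG(\bZ)}$.

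Next I verify that $E$ is a closed set of zero measure. Since $\Gamma_1 = \SO_d(\bZ)$ is finite (the signed permutation matrices of determinant one), and since for each nontrivial $\gamma \in \Gamma_1$ the fixed subspace $V_\gamma = \ker(\gamma - \mathrm{Id})$ is a proper linear subspace of $\bR^d$, the set $\tilde E$ is the image in $\mathbf{S}^{d-1}$ of the closed union $\bigcup_{\gamma\in\Gamma_1\setminus\{e\}}(\bS^{d-1}\cap V_\gamma)$. Each piece is a subsphere of dimension at most $d-2$, so $\tilde E$ is closed and $m_{\mathbf{S}^{d-1}}(\tilde E)=0$; hence $E = \tilde E \times \cY_{d-1}$ is closed with zero limit measure. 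The Portmanteau theorem then gives $\limsup_{n}\mu_n(E) \leq 0$, so $\mu_n(E)\to 0$.

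To convert this into a point-count statement, Lemma~\ref{lem:weights} furnishes $\mu_n(x) \geq M_n/a$ for each $x \in R_{v_n}$, while the identity $\mu_n(R_{v_n}) = \sum_{x} \mu_n(x) \leq M_n |R_{v_n}|$ implies $M_n \geq \mu_n(R_{v_n})/|R_{v_n}|$. Combining,
\begin{equation*}
 \mu_n(E) = \sum_{x \in R_{v_n}\cap E} \mu_n(x) \;\geq\; \frac{\mu_n(R_{v_n})}{a\,|R_{v_n}|}\,|R_{v_n}\cap E|.
\end{equation*}
Since the open-and-closed set $\cU$ is a continuity set of $\mu_S$ and $\mu_{v_n,S}\to\mu_S$ by Theorem~\ref{thm:Hom spaces equi}, we have $\mu_n(R_{v_n}) = \mu_{v_n,S}(\cU) \to \mu_S(\cU) = 1$. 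Rearranging then yields
\[
 \frac{|R_{v_n}\cap E|}{|R_{v_n}|} \;\leq\; \frac{a\,\mu_n(E)}{\mu_n(R_{v_n})} \;\longrightarrow\; 0.
\]
The main conceptual point, and what would be the obstacle in a more direct approach, is that this soft argument bypasses any quantitative lower bound on $|M_0^{(n)}|$ (or upper bound on the number of primitive integer vectors of given squared norm lying in a proper subspace); the only ingredients are that $E$ is a measure-zero closed set in the limit space and that Lemma~\ref{lem:weights} prevents $\mu_n$ from concentrating too sharply on any single point.
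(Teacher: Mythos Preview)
Your argument is essentially the same as the paper's: both use the equidistribution statement~\eqref{eq: equi real hom space}, observe that~$\tilde E$ is a closed null set (so the Portmanteau inequality gives $\mu_n(E)\to 0$), and then convert this into the counting estimate via the pointwise weight bounds of Lemma~\ref{lem:weights}. The only cosmetic difference is that the paper first projects to~$\mathbf S^{d-1}$ via~$\pi_1$ before applying Portmanteau, whereas you work directly with~$E=\tilde E\times\cY_{d-1}$ on the product space.

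One small correction: your sentence ``$\mu_n(R_{v_n}) = \mu_{v_n,S}(\cU) \to \mu_S(\cU) = 1$'' is both unnecessary and not quite right. By the paper's convention the restriction $\eta_{v_n}=\mu_{v_n,S}|_{\cU}$ is already renormalised to a probability measure, so $\mu_n=(\rho\circ\pi)_*\eta_{v_n}$ is a probability measure supported on~$R_{v_n}$ and $\mu_n(R_{v_n})=1$ directly; there is no need to invoke Theorem~\ref{thm:Hom spaces equi} here, and in fact $\mu_S(\cU)$ need not equal~$1$ (it measures only the principal-genus piece). With $\mu_n(R_{v_n})=1$ your displayed inequality becomes exactly the paper's bound $\mu_n(E)\ge a^{-1}|R_{v_n}\cap E|/|R_{v_n}|$.
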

\begin{proof}[Proof of Lemma  \ref{lem:weights}]
For $x\in R_{v_n}$ let $S(x)\defi S(\mathbf{\frac{\mathbf{\mathbf{u}}}{\norm{\mathbf{\mathbf{u}}}}})$ where $x=x(u)=\upa$. By the definition of $\nu_n$, we have for $x(u)\in R_{v_n}$ that  $$\nu_n(x(u))=\frac{\av{\Gamma_1}/S(x(u))}{\sum_{y\in R_{v_n}}\av{\Gamma_1}/S(y)}=\frac{S(x(u))^{-1}}{\sum_{y\in R_{v_n}}S(y)^{-1}}.$$ So the lemma follows for $\nu_n$. For $\mu_n$ first note that, using  (\ref{eq:M+ union}) we have $$\mu_n(x(u))=\eta_{v_n}(\Theta_{K}\times\bL_v(\bZ_{p})(k_{v},h,\theta_{v},l(h))\bG(\bZ[\tfrac{1}{p}]))$$ where $h=h(x(u))$ is the unique (by Prop. \ref{prop:Pv and schmidt}) element corresponding to $x(u)$ in $M_0$. 
Therefore, we will be done once we show that the stabilizer  of the above orbit, namely, 

\begin{equation}
\av{\pa{\Theta_{K}\times\bL_v(\bZ_{p})}\cap\alpha_{h}\bG(\bZ[\tfrac{1}{p}]))\alpha_{h}^{-1}}\label{eq:u stabilizer}
\end{equation}
is bounded by $S(x(u))$, where $\alpha_h\defi (k_{v},h,\theta_{v},l(h))$. To this end, notice that as $\Theta_{K}\times\bL_v(\bZ_{p})$ embeds diagonally into the product space $\bG_S$, the third and the fourth coordinate of an element in this stabilizer are determined by the first and the second. As we are only interested in getting an upper bound it is enough to consider the stabilizer  in $\bG_1$.  Using that $Kk_v=k_v\bH_v(\bR)$ and  $\bH_v(\bZ_{p})\subset \bG_1(\bZ_{p})$ it is enough to bound 
$$ \av{(\bH_v(\bR)\times \bG_1(\bZ_{p}))\cap(e,h)\bG_1(\bZ[\tfrac{1}{p}])(e,h^{-1})}.$$
Using the decomposition $h=c\gamma \defi c(h)\gamma(h)^{-1}$ and that $c \in  \bG_1(\bZ_{p})$ the latter is bounded by 
\begin{equation}
\av{(\bH_v(\bR)\times \gamma\bG_1(\bZ_{p})\gamma^{-1})\cap\bG_1(\bZ[\tfrac{1}{p}])}.\label{eq:last stab}
\end{equation}
As $\gamma\in \bG_1(\bZ[\tfrac{1}{p}])$ we have $\gamma\bG_1(\bZ_{p})\gamma^{-1}\cap \bG_1(\bZ[\tfrac{1}{p}])=\gamma\bG_1(\bZ)\gamma^{-1}$. Therefore $$\av{\bH_v(\bR)\cap\gamma\bG_1(\bZ)\gamma^{-1}}=\av{\gamma^{-1}\bH_v(\bR)\gamma\cap\bG_1(\bZ)}=S(x(u))$$ bounds (\ref{eq:last stab}).
\end{proof}
\begin{proof}[Proof of Lemma \ref{lem:negligible E} ]
We have that $\mu_n(E)=\mu_n(\tilde E\times \cY_{d-1})\slra{n\ra\infty} 0$ since by (\ref{eq: equi real hom space}) we have  $\limsup_{n\ra\infty}\pa{\pi_1}_{*}\mu_n(\tilde E)\leq m_{\mb{S}^{d-1}}(\tilde E)=0$. Here $\pi_1:\mb{S}^{d-1}\times \cY_{d-1}\ra \mb{S}^{d-1}$ is the projection map. Using Lemma \ref{lem:weights} we have
\begin{equation}
\mu_n(E)=\frac{\mu_n(E\cap R_{v_n})}{\mu_n(R_{v_n})}
\ge \frac{\frac{M_n}{a}|E\cap R_{v_n}|}
{M_n|R_{v_n}|}
\ge 
\frac{1}{a}\frac{|E\cap R_{v_n}|}{|R_{v_n}|}
\end{equation}
which gives~\eqref{eq:QcapE is negligible}.

\end{proof}
This shows \eqref{eq:close to counting} and thus that 
\begin{equation}
\lim_{n\ra \infty} \nu_n=\lim_{n\ra \infty}\mu_n=m_{\mathbf{S}^{d-1}}\otimes m_{\cY_{d-1}}. \label{eq:same limit}
\end{equation}

\subsection{Concluding the proof of Theorem \ref{thm:main theorem}.}

We have to show that the convergence in (\ref{eq:ASL2 conv modulu Gamm1})
holds. In fact, we have proven a stronger statement. The support of
$\nu_{D}$ can be written as a disjoint union of equivalence classes
of the form $R_{v}$ for some $v\in\bS^{d-1}(D)$. The convergence
in \eqref{eq:same limit} shows that \emph{each }sequence of the form $(\nu_{v})$ for any
choice of varying vectors $v$ (under the congruence condition $\norm{v}^{2}\in\bD(p)$
when $d=4\text{ or }5$), equidistribute to $m_{\textbf{S}^{d-1}}\otimes m_{\cY_{d-1}}$.
This implies Theorem \ref{thm:main theorem}.

\author{\bibliographystyle{plain}
\bibliography{SOS}
}

\end{document}